\documentclass[12pt,a4paper,twoside]{amsart}
\usepackage{amsmath}
\usepackage{amssymb}
\usepackage{amsfonts}
\usepackage{enumerate}



\linespread{1.1}

\date{}

\def\tle{\triangleright}
\def\Diff{\text{Diff}}

\def\Aut{\text{Aut}}

\def\Isom{\text{Isom}}
\def\Sym{\text{Sym}}
\def\Out{\text{Out}}
\def\Inn{\text{Inn}}
\def\Homeo{\text{Homeo}}
\def\Hom{\text{Hom}}

\def\Z{\bold Z}

\def\Ga{\Gamma}
\def\bbr{\mathbb{F}}

\def\bbz{\mathbb{Z}}
\def\bbn{\mathbb{N}}

\theoremstyle{plain}
\newtheorem{thm}{Theorem}[section]
\newtheorem*{thm*}{Theorem}
\newtheorem{ingr}[thm]{Ingredient}

\newtheorem{prop}[thm]{Proposition}
\newtheorem*{prop*}{Proposition}

\def\beq{\begin{equation}}

  \def\ee{\end{equation}}

\theoremstyle{definition} 
\newtheorem{definition}[thm]{Definition}
\newtheorem*{definition*}{Definition}
\newtheorem*{thm1*}{Theorem A1}
\newtheorem*{thm2*}{Theorem A2}

\newtheorem*{conjecture*}{Conjecture}
\newtheorem{cor}[thm]{Corollary}

\newtheorem*{claim*}{Claim}

\newtheorem*{remark*}{Remark}
\def\bbz{\mathbb{Z}}

\def\bbr{\mathbb{R}}

\def\bbh{\mathbb{H}}
\def\be{\begin{equation}}
\def\ee{\end{equation}}

\def\a{\alpha}

\def\ol{\overline}

\theoremstyle{remark}  
\begin{document}

\title[Finite transformation groups]{A Trichotomy Theorem\\ for transformation groups\\ of locally symmetric manifolds\\ and topological rigidity}
\author[S. Cappell]{Sylvain Cappell}
\author[A. Lubotzky]{Alexander Lubotzky}
\author[S. Weinberger]{Shmuel Weinberger}
\maketitle


\baselineskip 16pt


\begin{abstract}  Let $M$ be a locally symmetric irreducible closed manifold of dimension $\ge 3$.  A result of Borel [Bo] combined with Mostow rigidity imply that there exists a finite group $G = G(M)$ such that any finite subgroup of $\Homeo^+(M)$ is isomorphic to a subgroup of $G$.  Borel [Bo] asked if there exist $M$'s with $G(M)$ trivial and if the number of conjugacy classes of finite subgroups of $\Homeo^+(M)$ is finite.  We answer both questions:
\begin{enumerate}
\item{} For every finite group $G$ there exist $M$'s with $G(M) = G$, and
\item{} the number of maximal subgroups of $\Homeo^+(M)$ can be either one, countably many or continuum and we determine (at least for $\dim M \neq 4$) when each case occurs.
\end{enumerate}
Our detailed analysis of (2) also gives a complete characterization of the topological local rigidity and topological strong rigidity (for dim$M\neq 4$) of proper discontinuous actions of uniform lattices in semisimple Lie groups on the associated symmetric spaces.
\end{abstract}

\section{Introduction}
A positive dimensional, oriented, closed manifold $M$ has a very large group of automorphisms (i.e., orientation preserving  self homeomorphisms).  In fact this group $\Homeo^+(M)$ is infinite dimensional.  But its finite subgroups are quite restricted.
In 1969, Borel showed (in a classic paper \cite{Bo} but which appeared only in 1983 in his collected works) that if $M$ is a $K(\pi, 1)$-manifold with fundamental group $\Ga=\pi_1 (M)$, whose center is trivial, then every finite transformation group $G$ in $\Homeo^+(M)$ is mapped injectively into the outer automorphism group $\Out(\Ga)$ by the natural map (or more precisely into the subgroup $\Out^+(\Ga)$ -- see \S 2 -- which has an index at most 2 in $\Out(\Ga)$).

Let now $M$ be a locally symmetric manifold  of the form $\Ga\backslash H / K$ when $H$ is a connected non-compact semisimple group with trivial center and with no compact factor and $\Gamma$ a torsion free uniform irreducible lattice in $H$.  In the situation in which strong rigidity holds (i.e.,  if $H$ is not locally isomorphic to $SL_2(\bbr)$), $\Out (\Ga)$ is a finite group, $G \le \Out^+(\Ga)$; in fact, $\Out^+(\Ga)  =  N_H (\Ga)/\Ga$ and it acts on $M$ as the group of (orientation preserving) self isometries $\Isom^+(M)$ of the Riemannian manifold $M$.  It follows now from Borel's theorem that every finite subgroup of $\Homeo^+(M)$ is isomorphic to a subgroup of one finite group, $G(M)= \Isom^+(M)$.

Borel ends his paper by remarking: ``The author does not know whether the finite subgroups of $\Homeo^+(M)$ form finitely many conjugacy classes, nor whether one can find a $\Ga$ with no outer automorphism."

The goal of the current paper is to answer these two questions.  For an efficient formulation of our results, let us make the following definition(s):

\begin{definition}
Let $G$ be a finite group.  An oriented manifold $M$ will be called $G$-exclusive (resp., $G$-weakly exclusive) if there is a faithful action of $G$ on $M$, so that $G$ can  be identified with a subgroup of $\Homeo^+(M)$ and if $F$ is any finite subgroup of $\Homeo^+(M)$, then $F$ is conjugate (resp., isomorphic) to a subgroup of $G$.
\end{definition}
Note that Borel's Theorem combined with strong rigidity implies that unless $H$ is locally isomorphic to $SL_2(\bbr)$, $M$ as above is always at least $\Isom^+(M)$-weakly exclusive.

We now claim:
\begin{thm}
For every finite group $G$ and every $3 \le n \in \bbn$, there exist infinitely many oriented closed hyperbolic manifolds $M=M^n(G)$ of dimension $n$, with $G\simeq \Isom^+(M)$ and when $n \neq 4$ these $M^n(G)$ are also $G$-exclusive.
\end{thm}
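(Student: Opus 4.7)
The plan is to split the theorem into two largely independent parts: first, realize any finite group $G$ as $\Isom^+(M)$ of an oriented closed hyperbolic $n$-manifold $M$, producing infinitely many such $M$; and second, upgrade Borel's weak exclusivity to genuine $G$-exclusivity when $n\neq 4$.

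For the realization step I would appeal to the arithmetic construction of Belolipetsky and Lubotzky, which for every $n\geq 2$ and every finite group $H$ produces infinitely many pairwise non-isometric closed hyperbolic $n$-manifolds whose full isometry group is isomorphic to $H$ (one obtains infinitely many examples by varying the congruence level). Applying this with $H$ chosen so that an appropriate orientation-preserving index-two subgroup equals $G$ (e.g.\ realizing $G\times\bbz/2$ and arranging the extra involution to be orientation-reversing) yields infinitely many oriented $M$ with $\Isom^+(M)\cong G$. The weak-exclusivity statement is then free from the discussion preceding the theorem: Borel's theorem combined with Mostow rigidity forces every finite $F\leq \Homeo^+(M)$ to embed into $\Out^+(\Ga)\cong\Isom^+(M)=G$.

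To upgrade weak exclusivity to exclusivity in dimensions $n\neq 4$, I would take an arbitrary finite $F\leq \Homeo^+(M)$ together with its Borel embedding $\iota\colon F\hookrightarrow G=\Isom^+(M)$. The given topological $F$-action and the isometric action via $\iota$ induce the same outer action on $\Ga=\pi_1(M)$, so on the universal cover $\bbh^n$ they yield two proper cocompact topological actions of the same virtually torsion-free group $\widehat{\Ga}:=\Ga\rtimes_\iota F$ extending the deck action of $\Ga$. I would then invoke equivariant topological rigidity to conclude that these two $\widehat{\Ga}$-actions are conjugate by a $\Ga$-equivariant homeomorphism of $\bbh^n$, which descends to an element of $\Homeo^+(M)$ conjugating $F$ into $G$.

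The main obstacle is precisely this last equivariant-rigidity step. In dimensions $\geq 5$ one can combine the Farrell--Jones conjecture for $\widehat{\Ga}$ (valid for virtually cocompact lattices in semisimple Lie groups) with topological surgery to identify the two actions up to equivariant homeomorphism; in dimension $3$ the analogous conclusion follows from Mostow--Prasad rigidity and Perelman's geometrization of $3$-orbifolds. The exclusion of $n=4$ is genuine, since equivariant topological surgery is not available there and so two topologically distinct but abstractly isomorphic finite group actions on $M$ cannot in general be ruled out.
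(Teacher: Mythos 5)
Your realization step is essentially the paper's Step I (though note the Belolipetsky--Lubotzky examples are \emph{non}-arithmetic, built from the Gromov--Piatetski-Shapiro lattice, and they already give $\Isom(M)=\Isom^+(M)\cong G$ directly, so the $G\times\bbz/2$ detour with an orientation-reversing involution is unnecessary). The genuine gap is in your exclusivity step. The ``equivariant topological rigidity'' you invoke for $\widehat\Ga$ is precisely the equivariant Borel conjecture for a cocompact group containing torsion, and this is \emph{false} in general: the entire content of the Trichotomy Theorem in this paper is that when the singular set of the action is positive dimensional there is a continuum of pairwise non-conjugate actions inducing the same outer automorphisms of $\Ga$, and even with discrete singular set one can get countably many inequivalent actions because of UNil. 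Farrell--Jones plus topological surgery does not identify two such actions up to equivariant homeomorphism without strong hypotheses on the fixed-point sets. As written, your argument would ``prove'' $G$-exclusivity for every locally symmetric $M$ of dimension $\ge 5$, contradicting cases (b) and (c) of Theorem 1.5.

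What makes the theorem true is a property of the specific manifolds from Step I that your proposal never uses: $\Ga$ is chosen so that $N_H(\Ga)$ is \emph{torsion free}, hence $G=N_H(\Ga)/\Ga$ acts \emph{freely} on $M$. Given an arbitrary finite $F\le\Homeo^+(M)$ with image $L\le\Isom^+(M)$, the lifts of $F$ to the universal cover form an extension $1\to\Ga\to\ol\Ga'\to F\to 1$ inducing the same outer action as the torsion-free group $\ol\Ga\le N_H(\Ga)$; since $\Ga$ is centerless, such extensions are classified by the outer action, so $\ol\Ga'\cong\ol\Ga$ is torsion free and $F$ also acts freely on $M$. (In particular this extension need not split, so $\Ga\rtimes_\iota F$ is the wrong group: if it split it would contain torsion and the rigidity you need would fail.) Now $M/F$ and $M/L$ are closed aspherical manifolds with isomorphic fundamental groups, and only the ordinary, non-equivariant Borel conjecture (Farrell--Jones for $n\ge5$, Gabai--Meyerhoff--Thurston for $n=3$) is needed to produce a homeomorphism of the quotients homotopic to the given homotopy equivalence, which then lifts to a homeomorphism of $M$ conjugating $F$ to $L$. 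This also locates the true reason for excluding $n=4$: the ordinary Borel conjecture is open for closed hyperbolic $4$-manifolds, rather than any failure of equivariant surgery.
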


The  very special case $G = \{ e \}$ answers Borel's second question (where one can also deduce it from [BL]). Along the way it also answers the question of Schultz  \cite{Sc2}, attributed there to D. Burghelea, who asked whether  there exist asymmetric closed manifolds with degree one maps onto hyperbolic manifolds.
Our examples are even hyperbolic themselves.

The situation for dimension 2 is very different:
\begin{thm} For no group $G$, does there exist a $G$-weakly exclusive 2-dimensional closed manifold.
\end{thm}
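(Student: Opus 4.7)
A closed oriented 2-manifold is either $S^2$, the torus $T^2$, or a higher-genus surface $\Sigma_g$ with $g\ge 2$; I would treat each family separately.

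For $S^2$ the rotation group $SO(3)\subset\Homeo^+(S^2)$ contains cyclic subgroups $C_n$ of all finite orders, and the same applies via $SO(2)\times SO(2)\subset\Homeo^+(T^2)$. Since a single finite group $G$ has only finitely many isomorphism types of subgroups, neither $S^2$ nor $T^2$ can be $G$-weakly exclusive for any finite $G$.

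For $g\ge 2$, suppose some finite $G\le\Homeo^+(\Sigma_g)$ is weakly exclusive. A classical theorem of Nielsen says that any periodic self-homeomorphism of $\Sigma_g$ isotopic to the identity must be the identity, so every finite subgroup of $\Homeo^+(\Sigma_g)$ injects into the mapping class group; by Kerckhoff's Nielsen realization theorem, each such finite subgroup is realized as isometries of some hyperbolic metric on $\Sigma_g$. Hurwitz's $84(g-1)$ bound then forces $|G|\le 84(g-1)$.

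To derive a contradiction I would exhibit, for each $g\ge 2$, two finite subgroups $H_1,H_2\le\Homeo^+(\Sigma_g)$ that cannot both embed as abstract subgroups in any $G$ with $|G|\le 84(g-1)$. A natural choice is $H_1=C_{4g+2}$, realized by Wiman's rotation of a regular $(4g+2)$-gon, together with $H_2$ a large non-cyclic automorphism group of a Riemann-surface structure on $\Sigma_g$, provided for every $g$ by Accola--Maclachlan with $|H_2|\ge 8(g+1)$ (or a Hurwitz group of order $84(g-1)$ when one exists). A short computation shows $\mathrm{lcm}(4g+2,\,8(g+1))=8(g+1)(2g+1)>84(g-1)$, which handles the generic case; when $H_2$ has order close to $84(g-1)$ and shares large factors with $4g+2$, one argues instead that the particular $H_2$ contains no element of order $4g+2$ (as for $PSL_2(\bbf_7)$ at $g=3$), so $H_1$ cannot embed in $H_2$ as an abstract group and any common overgroup $G$ must have $|G|>|H_2|=84(g-1)$. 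The main obstacle is uniformizing this structural check in $g$; for this I would lean on Harvey's and Maclachlan's classifications of signatures realizing cyclic and maximal automorphism groups on surfaces.
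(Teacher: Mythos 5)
Your proposal follows the same skeleton as the paper's proof---dispose of genus $0$ and $1$ via cyclic subgroups of unbounded order, then for $g\ge 2$ play the Hurwitz bound $|G|\le 84(g-1)$ against the least common multiple of the orders of explicitly exhibited finite subgroups of $\Homeo^+(\Sigma_g)$---but your choice of witnesses is genuinely different and, in fact, more efficient than you seem to realize. The paper uses the three groups $C_{\sigma-1}$, $C_\sigma$ and the Accola--Maclachlan group of order $8(\sigma+1)$; their lcm is only bounded below by $(\sigma-1)\sigma(\sigma+1)/2$, which beats $84(\sigma-1)$ only for $\sigma\ge 13$, and the paper must then grind through $\sigma=2,\dots,12$ by hand (Sylow arguments for $\sigma=8$, the groups $SL_2(7)$ and $\Sym(5)$ for $\sigma=5,4$, and Broughton's tables for $\sigma=2,3$). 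Your pair $C_{4g+2}$ (Wiman's automorphism, realized for every $g\ge2$ by the signature $(0;2,\,2g+1,\,4g+2)$, e.g.\ on $y^2=x^{2g+1}-1$) and the Accola--Maclachlan group of order $8(g+1)$ have $\gcd$ of orders equal to $2$ (since $2g+1$ is odd and coprime to $g+1$), so $\mathrm{lcm}(4g+2,\,8(g+1))=8(g+1)(2g+1)=16g^2+24g+8$, and $16g^2-60g+92>0$ for all real $g$; hence the lcm exceeds $84(g-1)$ for \emph{every} $g\ge2$ and the case analysis disappears entirely.

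The one defect in your write-up is the hedging at the end: the ``generic case'' is all cases, and the contingency plan involving Hurwitz groups, $PSL_2(\mathbb{F}_7)$ at $g=3$, and Harvey's and Maclachlan's signature classifications is a red herring created by your parenthetical suggestion to sometimes take $H_2$ to be a Hurwitz group of order $84(g-1)$ (whose order can share factors with $4g+2$). Fix $H_2$ to be the Accola--Maclachlan group once and for all and the proof closes uniformly. The remaining ingredients you invoke are all sound and match the paper's: the injection of finite subgroups of $\Homeo^+(\Sigma_g)$ into the mapping class group for $g\ge2$, Kerckhoff's realization (the paper instead smooths via Ker\'ekj\'art\'o and uniformizes, to the same effect), and the existence of the two actions via the converse direction of Riemann--Hurwitz.
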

In fact, for every closed, oriented  surface $\Sigma_g$, of genus $g, \,   \Homeo^+ (\Sigma)$ has more than one (but only finitely many) isomorphism classes of maximal finite subgroups, and this  number is unbounded as a function of $g$ - see Proposition 6.2.

As mentioned before, for $M$ as above with $\dim M\ge 3, M$ is always $G$-weakly exclusive.  But the $G$-exclusiveness shown in Theorem 1.2 is not the general phenomenon.  We can determine the situation in (almost) all cases. But first we need another definition.

\begin{definition}
For an automorphism $\varphi$ of a manifold $M$, denote by $Fix(\varphi)$ the fixed point set of $\varphi$ and for a subgroup $G \subseteq \Homeo^+(M)$, denote its {\it singular set}  $S(G) = \cup \{ Fix (\varphi) | \varphi \in G, \varphi\neq id \}$. If $M$ is an oriented Riemannian manifold, then we will call $S(\Isom^+(M))$ the {\it singular set} of $M$ and we denote it $S_M$.
\end{definition}

 We note that $\dim(M) - \dim(S_M)$ is always even, as we are only considering orientation preserving actions.

\medskip

Before stating our main theorem, let us recall that in our situation, i.e., when $M$ is locally symmetric, every finite subgroup of $\Homeo^+(M)$ is contained in a maximal finite subgroup.  We can now give a very detailed answer to Borel's first question.

\begin{thm}[Trichotomy Theorem]
Let $M = \Ga\backslash H/K$ a locally symmetric manifold as above, and assume $\dim M \neq 2$ or $4$.  Let  $G = \Isom^+ (M)$, so $G \cong N/\Ga$ where $N = N_H(\Ga)$.  Then one of the following holds:

\begin{enumerate}[{\rm(a)}]

\item $\Homeo^+(M)$  has a unique conjugacy class of maximal finite subgroups, all of whose members are conjugate to $\Isom^+(M)$.

    \item $\Homeo^+(M)$ has countably infinite many maximal finite subgroups, up to conjugacy \ or
    \item $\Homeo^+(M)$ has a continuum of  such subgroups (up to conjugacy).
    \end{enumerate}
These cases happen, if and only if the following hold, respectively:

\begin{enumerate}[{\rm (a)}]
\item \ {\rm (i)} $ S_M = \phi$, i.e., $\Isom^+(M)$ acts freely on $M$, or
\newline
\ \  {\rm (ii)}  the singular set $S_M$ is 0-dimensional and either $\dim(M)$ is divisible by 4 or all elements  of order 2 act freely.
\item $M$ is of dimension equal 2 (mod 4), the singular set is 0-dimensional and some element of order 2 has a non-empty fixed point set, or
\item the singular set $S_M$ is positive dimensional, i.e.,  $M$ has some non-trivial isometry with a positive dimensional fixed point set.
\end{enumerate}
\end{thm}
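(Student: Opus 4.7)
The plan is to combine Borel's theorem (which fixes finite subgroups up to abstract isomorphism), equivariant topological rigidity for aspherical manifolds (to convert abstract data into conjugacy), and local constructions on singular sets (to produce non-conjugate actions). By Borel, every finite $F\le\Homeo^+(M)$ embeds into $G=\Isom^+(M)$, and two such subgroups are conjugate in $\Homeo^+(M)$ iff they give topologically conjugate actions. Since $M$ is aspherical with centerless fundamental group, the induced outer action $F\to\Out^+(\Ga)=G$ is a complete homotopy invariant of such an action, so the task reduces to classifying topological actions on $M$ realizing a fixed embedding $F\hookrightarrow G$, up to equivariant homeomorphism.

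For case (c), if some $g\in G$ has $\dim Fix(g)\ge 1$, then $Fix(g)$ is a closed submanifold of codimension $\ge 2$, and its equivariant tubular neighborhood admits a continuum of inequivalent topological knottings: one replaces a tube around a codimension-two slice of $Fix(g)$ by any of an uncountable family of inequivalent ones, or forms an equivariant connected sum with a knotted pair $(S^n,K)$. Distinct members of the resulting family are non-conjugate, being distinguished by a knot-theoretic invariant of the fixed set such as $\pi_1$ of its complement in $M$.

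For case (a)(i), when $G$ acts freely the quotient $M/G$ is a closed aspherical manifold with $\pi_1 = N_H(\Ga)$; any other free $G$-action realizing the same outer action produces an aspherical quotient with the same fundamental group, and Farrell--Jones topological rigidity (in the range $\dim\neq 4$ relevant here) makes the induced homotopy equivalence homotopic to a homeomorphism, which lifts to an equivariant homeomorphism upstairs. Hence one conjugacy class.

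The main difficulty lies in the dichotomy between (a)(ii) and (b) when $S_M$ is $0$-dimensional. Equivariant surgery reduces topological conjugacy to agreement of local tangent representations at isolated fixed points together with vanishing of an obstruction in an equivariant $L$-group. For odd-order isotropy (and, more generally, whenever every involution in $G$ acts freely) the Atiyah--Singer $G$-signature theorem, combined with the global signature of $M$, pins the sum of local defects to a unique value; in dimension divisible by $4$ the ordinary signature of $M$ plays the same role for involutions, and (a)(ii) follows. By contrast, when $\dim M\equiv 2\pmod 4$ and some involution has an isolated fixed point, no signature invariant in degree $n$ exists on $M$ to constrain the local $G$-signature defects; the relevant obstruction group is countably infinite, and each value is realized by a topologically distinct $G$-action on $M$ obtained by equivariant surgery on the standard action. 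The hard part is this last realization step: one must show that every admissible value of the obstruction is attained by an honest $G$-action on $M$ itself (not merely on some $h$-cobordant manifold), which uses the topological rigidity of $M$ to absorb the $s$-cobordism produced by equivariant surgery into a genuine $G$-action, and is precisely where the hypothesis $\dim M\neq 4$ is essential.
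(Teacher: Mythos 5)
Your overall reduction (Borel's theorem plus asphericity turns the problem into classifying topological actions realizing a fixed embedding $F\hookrightarrow\Isom^+(M)$ up to equivariant homeomorphism) and your treatment of case (a)(i) via Farrell--Jones rigidity applied to the free quotient both match the paper. The serious gap is in the (a)(ii)/(b) dichotomy. The Atiyah--Singer $G$-signature theorem is an invariant of smooth (or at least suitably bordant, locally linear) actions and does not by itself prove uniqueness of topological actions with isolated fixed points; in particular it says nothing about the $K$-theoretic (Whitehead/Nil) part of the classification, and ``no signature invariant exists in degree $\equiv 2\pmod 4$'' is not an argument that inequivalent actions exist. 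What the paper actually uses (its Ingredient 3.4) is the Connolly--Davis--Khan classification of proper discontinuous actions with discrete singular set, available because Bartels--L\"uck proved the Farrell--Jones conjecture for these lattices: one first promotes the equivariant homotopy equivalence to an isovariant one (possible because the singular set is $0$-dimensional), then surgery reduces the classification to assembled $K$- and $L$-groups; the $K$-groups are assembled from finite subgroups, and the only surviving $L$-theoretic deviation is Cappell's UNil of maximal infinite dihedral subgroups of $\ol\Ga=N_H(\Ga)$, which is infinitely generated $2$-torsion precisely in degrees $\equiv 2\pmod 4$ and vanishes in degrees $\equiv 0\pmod 4$. The bridge to the stated hypotheses is that $\ol\Ga$ contains an infinite dihedral subgroup if and only if some involution in $\Isom^+(M)$ has a fixed point (Conner--Floyd gives a second fixed point, and the two geodesic arcs joining them close up to a geodesic generating, together with the involution, a dihedral group). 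You correctly flag the realization of all obstruction values as ``the hard part,'' but supply no mechanism for it; that step is exactly the content of the cited theorem and cannot be absorbed into generic equivariant-surgery language.

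A secondary gap is in case (c). There are only countably many tame knottings, so ``a continuum of inequivalent topological knottings of a tube'' is not available off the shelf, and the fixed set generally has codimension greater than two, so knotting a codimension-two slice is not even the right local model. The paper instead changes the topology of the fixed set itself (equivariant connected sum with non-simply connected homology spheres bounding contractible manifolds, when the fixed component has dimension $\ge 3$), or the local fundamental group of the complement at a fixed point (when the fixed component has dimension $1$ or $2$ but codimension $>2$); finitely many such insertions give only countably many actions, and the continuum is obtained by performing infinitely many insertions accumulating at a single point and distinguishing the resulting wild actions by the uniqueness of free product decompositions into freely indecomposable factors (Proposition 3.3 of the paper). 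Without such a limiting construction and a matching invariant you get at most countably many conjugacy classes, not a continuum.
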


    The cases treated in Theorem 1.2 are with $N = N_H(\Ga)$ torsion free (see \S2), i.e.,  $S(G) = \phi$ where $G = \Isom^+(M)$, so we are in case (a)(i) and
     these manifolds $M$ are $\Isom^+(M)$-exclusive also by Theorem 1.5.

An interesting corollary of the theorem is that if $\Homeo^+(M)$ has only finitely many conjugacy classes of maximal finite subgroups then it has a unique one, the class of $\Isom^+(M)$, in contrast to Theorem 1.3.

In dimension 4 when the action has positive dimensional singular set, we do construct uncountably many actions.  If the singular set is finite, then we have countability, but we do not know whether/when this countable set of actions consists of a unique possibility. As a consequence, the following dichotomy holds in all dimensions:

\begin{cor} Let $M$ be as above with arbitrary dimension.  Then $\Homeo^+ (M) $ has an uncountable number of conjugacy classes of finite subgroups if and only if the singular set of $\Isom^+ (M)$ acting on $M$ is positive dimensional.
\end{cor}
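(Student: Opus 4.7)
The plan is to combine Theorem 1.5 with the dimension-four assertions announced in the paragraph preceding the corollary. For $\dim M \neq 4$ the reduction is immediate from the Trichotomy Theorem: cases (a) and (b) give at most countably many conjugacy classes of maximal finite subgroups --- hence at most countably many conjugacy classes of finite subgroups overall, since every finite subgroup lies in a maximal one and each maximal finite subgroup has only finitely many subgroups --- while case (c) gives a continuum. Since (c) is characterized by $S_M$ being positive-dimensional, the asserted dichotomy holds in all dimensions outside four.

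For $\dim M = 4$ I would argue the two directions separately, since here Theorem 1.5 does not apply directly. When $S_M$ is positive-dimensional it must contain a closed $2$-dimensional submanifold $F$ (the codimension is even for orientation-preserving actions), and one adapts the case (c) construction from the proof of Theorem 1.5 to produce a continuum of pairwise non-conjugate finite cyclic actions by varying equivariant knotting data supported in a tubular neighborhood of $F$ and distinguished by a continuum of topological invariants (obtained, for instance, via satellite constructions in the normal disk bundle). Since the construction stays in the topological category, the low-dimensional smooth-surgery obstructions are irrelevant. When instead $S_M$ is empty or $0$-dimensional, any finite subgroup $F \le \Homeo^+(M)$ has only isolated fixed points; at each such point the local topological action is conjugate to a linear model drawn from a countable set, and a countable list of global topological invariants --- parallel to those used for cases (a) and (b) of Theorem 1.5 --- then distinguishes the global conjugacy classes, yielding at most countably many.

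The main obstacle is this last countability statement in dimension four: classical equivariant surgery is weakest here, so the argument must take place throughout in the topological category and must leverage the discreteness of the slice representations at the isolated fixed points. As the paper observes, one obtains countability but not uniqueness in this case, so the full Trichotomy Theorem fails to extend to dimension four; still, the coarser dichotomy of Corollary 1.6 does.
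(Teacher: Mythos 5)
Your reduction for $\dim M\neq 2,4$ is exactly the paper's: cases (a) and (b) of Theorem 1.5 give countably many conjugacy classes of finite subgroups (your remark that each maximal finite subgroup contributes only finitely many is the right bookkeeping), case (c) gives a continuum, and (c) is precisely the positive-dimensional condition. But there are three gaps. First, Theorem 1.5 explicitly excludes $\dim M=2$, so "immediate for all $\dim M\neq 4$" is not right as stated; dimension $2$ has to be handled by the separate analysis of Section 6 (Nielsen realization plus the Hurwitz bound give \emph{finitely} many conjugacy classes, and orientation-preserving isometries of surfaces have isolated fixed points, so both sides of the equivalence are false there). This is easy to repair but must be said.

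The serious gaps are in dimension $4$, which is where the corollary has content beyond Theorem 1.5. For the positive-dimensional case you propose "equivariant knotting data distinguished by a continuum of topological invariants via satellite constructions," but you never identify an invariant that actually works, and the obvious candidates fail: the modification replaces the fixed surface $V$ by $V\#S^2\cong V$, so neither the fundamental group of the fixed set nor the local fundamental group changes. The paper (Theorems 5.1--5.2) builds the actions from Zeeman's twist-spun torus knots via Giffen, and distinguishes them only after passing to the universal cover, where $\pi_1$ of the complement of the lifted fixed set changes from $\mathbb{Z}$ to an infinite amalgamated product of the knot groups $\pi_1(N_i)$; the continuum then requires the Kurosh subgroup theorem together with the free indecomposability of the Brieskorn fibers $\pi_1(F_i)$. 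Without some such invariant your construction proves nothing. For the converse direction in dimension $4$ you assert that an isolated fixed point of a topological action is locally conjugate to one of countably many linear models; local linearity of topological actions on $4$-manifolds is exactly the delicate point and cannot be assumed. The paper's countability claim for discrete singular set rests instead on the Connolly--Davis--Khan classification combined with the Bartels--L\"uck Farrell--Jones theorem (with Freedman--Quinn supplying the $4$-dimensional surgery input), not on a local-linearity-plus-invariants argument.
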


The uniqueness in Theorem 1.5 fails in the smooth case (i.e.,  for $\Diff^+(M)$).  In that case, the number of conjugacy classes is always countable.  The boundary between finite and infinite number of conjugacy classes of finite subgroups of $\Diff^+(M)$  can be largely analyzed by the methods of this paper, but works out somewhat differently (e.g., one has finiteness in some cases of one dimensional singular set) and is especially  more involved when the singular set is 2-dimensional.  We shall not discuss this here.

Finally, let us present our result from an additional point of view: Given $H$ as above and $\Gamma$ a uniform lattice in it.  It acts via the standard action $\rho_0$ by translation on the symmetric space $H/K$ which topologically is $\bbr^d$. The Farrell-Jones topological rigidity result implies that
if $\Ga$ is torsion free,
every proper discontinuous (orientation preserving) action $\rho$ of $\Ga$ on $H/K$ is conjugate within $\Homeo^+ (H/K)$ to $\rho_0$.  It has been known for a long time (cf. \cite{We2} for discussion and references) that this is not necessarily the case if $\Gamma$ has torsion.  Our discussion above (with some additional ingredient based on \cite{CDK}, \cite{We2} - see \S 7)   gives the essentially complete picture.  But first a definition:

\begin{definition*}  For $\Ga, H, K$ and $\rho_0$ as above, say
\begin{enumerate}
\item The lattice $\Ga$ has topological strong rigidity if every proper discontinuous action $\rho$ of $\Ga$ on $H/K$, is conjugate to $\rho_0$ by an element of $\Homeo^+(H/K)$.
    \item $\Ga$ has local topological rigidity if for every proper discontinuous action $\rho$ of $\Ga$ on $H/K$,  there exists a small neighborhood $U$ of $\rho$ in $\Hom (\Ga, \Homeo^+ (H/K)$) such that any $\rho' \in U$ is conjugate to $\rho$ by an element of $\Homeo^+ (H/K)$).
        \end{enumerate}
        \end{definition*}
        The following two results follow from Corollary 1.6 and Theorem 1.5 (see \S 7):

        \begin{thm}\label{1.7} Let $H$ be a semisimple group, $K$ a maximal compact  subgroup and $\Ga$ an irreducible uniform lattice in $H$.  Then $\Ga$ satisfies the topological local rigidity if and only if for every non-trivial element of $\Ga$ of finite order, the fixed point set of its action on $H/K$ is zero dimensional.
        \end{thm}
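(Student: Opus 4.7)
My plan is to reduce Theorem 1.7 to the classification in Theorem 1.5 and Corollary 1.6 on the compact quotient $M$ associated to a torsion-free finite-index normal subgroup $\Gamma_0 \triangleleft \Gamma$ (Selberg's lemma). With $M = \Gamma_0\backslash H/K$ and $G = \Gamma/\Gamma_0$, the singular set $S_M$ of the standard isometric $G$-action on $M$ is precisely the image of the union of fixed point sets of non-trivial finite-order elements of $\Gamma$ acting on $H/K$, so the hypothesis of Theorem 1.7 is equivalent to $\dim S_M \le 0$. For any proper discontinuous $\rho \in \Hom(\Gamma, \Homeo^+(H/K))$, the restriction $\rho|_{\Gamma_0}$ is a free proper discontinuous action of a torsion-free uniform lattice, so by the Farrell--Jones topological rigidity invoked in the excerpt, after conjugating in $\Homeo^+(H/K)$, we may assume $\rho|_{\Gamma_0} = \rho_0|_{\Gamma_0}$; hence $\rho$ descends to a $G$-action on $M$, and a small deformation $\rho'$ of $\rho$ may similarly (parametrically) be straightened to yield a nearby $G$-action on $M$. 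Local rigidity of $\rho$ is thereby translated into openness of the conjugacy class of the induced $G$-action inside $\Hom(G, \Homeo^+(M))$.

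For the $(\Rightarrow)$ direction (contrapositive), suppose some finite-order $\gamma \in \Gamma$ has a positive-dimensional fixed set on $H/K$, so $\dim S_M \ge 1$. By Corollary 1.6 there is a continuum of conjugacy classes of finite subgroups of $\Homeo^+(M)$; inspecting the construction (case (c) of Theorem 1.5, which uses the ingredients of \cite{CDK}, \cite{We2} to continuously deform the $G$-action along the positive-dimensional singular strata), this continuum arises as a continuous family of pairwise non-conjugate $G$-actions accumulating at the standard one. Lifting back through the reduction above produces a continuous family of pairwise non-conjugate perturbations of $\rho_0$ inside any neighborhood of $\rho_0$ in $\Hom(\Gamma, \Homeo^+(H/K))$, so local rigidity fails.

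For the $(\Leftarrow)$ direction, assume $\dim S_M \le 0$ and fix any proper discontinuous $\rho$. The fixed point sets of the induced $G$-action on $M$ are then isolated, and by Smith-theoretic control this persists for nearby $G$-actions. At each isolated fixed point the local action of the stabilizer is topologically linearizable with invariants in the discrete set of orthogonal representations, so any nearby $G$-action is equivariantly conjugate to the original in a neighborhood of the singular set; on the free complement, the equivariant Farrell--Jones-type rigidity of \cite{CDK}, \cite{We2} extends this to a global equivariant conjugacy on $M$, proving local rigidity of $\rho$.

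The main obstacle is the $(\Leftarrow)$ direction: the parametric Farrell--Jones rigidity that converts small deformations on $H/K$ into small deformations of the $G$-action on $M$, together with the extension of local linearizations at isolated fixed points to a global equivariant conjugacy, are the delicate steps and rest on the machinery of \cite{CDK}, \cite{We2}; the 0-dimensional hypothesis on $S_M$ is exactly what makes the singular strata invisible to the gluing, so that one can reduce to the free case where the classical topological rigidity suffices.
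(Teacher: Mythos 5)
Your reduction to the compact quotient $M=\Gamma_0\backslash H/K$ and your treatment of the $(\Rightarrow)$ direction match the paper: the failure of local rigidity when some torsion element has positive-dimensional fixed set comes from the constructions behind Theorem 1.5(c)/Corollary 1.6, performed in smaller and smaller balls so that the exotic actions accumulate at the given one. The gap is in your $(\Leftarrow)$ direction. You assert that at an isolated fixed point the local action of the stabilizer is ``topologically linearizable with invariants in the discrete set of orthogonal representations,'' and you propagate rigidity from there. In the topological category this is unjustified and in general false: a perturbed action is only a continuous action, its germ at an isolated fixed point need not be conjugate to a cone on a linear sphere action, and nothing in your argument controls it. (The paper's own Ingredient 3.2 and the use of Bing's examples in dimension $3$ are precisely warnings that local structure of topological fixed points can be wild; the fact that the \emph{given} isometric action is locally linear does not transfer to nearby actions.) Your appeal to ``Smith-theoretic control'' for the persistence of zero-dimensionality of the singular set under perturbation is likewise only a heuristic, and your final globalization step (``equivariant Farrell--Jones-type rigidity extends this to a global equivariant conjugacy'') is exactly the statement to be proved, not a quotable input.

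What the paper actually uses at this point is Edmonds' theorem on local connectivity of spaces of group actions \cite[Theorem 2.8]{Ed}, which directly gives that when the singular set is discrete, nearby actions of the finite group $G$ on $M$ are conjugate; dimension $4$ is handled by \cite{FrQ} (see also \cite{FeW}), and dimension $3$ falls under cases (a) or (c) of Theorem 1.5 since case (b) cannot occur there. So the correct repair of your argument is to replace the ``local linearization plus free-complement rigidity'' step by a citation of Edmonds (and Freedman--Quinn in dimension $4$), after your (correct, but itself nontrivial) parametric straightening of $\rho'|_{\Gamma_0}$ that turns a small deformation of $\rho$ into a nearby $G$-action on $M$.
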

\begin{thm}\label{1.8}
For $H, K$ and $\Ga$ as in Theorem 1.7 but assuming \\ $\dim (H/K) \neq 2, 4$.  Then one of the following holds:

\begin{enumerate}[{\rm(a)}]
 \item $\Ga$ has topological strong rigidity, i.e., it has a unique (up to conjugation) proper discontinuous action on $H/K \simeq \bbr^n$.
    \item $\Ga$ has an infinite countable number of such actions, yet all are locally rigid.
    \item $\Ga$ has uncountably many (conjugacy classes) of such actions.
    \end{enumerate}
    These cases happen if and only if the following hold, respectively:
    \begin{enumerate}[{\rm(a)}]
    \item
    \begin{enumerate}[{\rm (i)}]
    \item $\Ga$ acts freely on $H/K$ or
    \item  every torsion (i.e.,  non-trivial of finite order) element of $\Ga$ has 0-dimensional fixed point set in $H/K$ and either $\dim(M) \equiv 0 \\ (mod \,  4)$ or there are no elements of order 2.
        \end{enumerate}
        \item $\dim(H/K) \equiv 2 (mod \,  4)$, the fixed point set of every torsion element is 0-dimensional and there is some element of order 2, or
            \item  there exist a torsion element in $\Ga$ with a positive dimensional fixed point set.
    \end{enumerate}

\end{thm}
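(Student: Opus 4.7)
The plan is to reduce Theorem \ref{1.8} to the trichotomy of Theorem 1.5 applied to a torsion-free finite cover of the orbifold $\Gamma\backslash H/K$. First, by Selberg's lemma I would pick a torsion-free normal subgroup $\Gamma_0\trianglelefteq\Gamma$ of finite index and set $M_0:=\Gamma_0\backslash H/K$, $G:=\Gamma/\Gamma_0$, so that $M_0$ is a closed locally symmetric manifold of dimension $\neq 2,4$ on which $G$ sits inside $\Isom^+(M_0)$. For any proper discontinuous action $\rho:\Gamma\to\Homeo^+(H/K)$, the restriction $\rho|_{\Gamma_0}$ is a free proper discontinuous action of the torsion-free lattice $\Gamma_0$, and by the Farrell--Jones topological rigidity theorem it is conjugate in $\Homeo^+(H/K)$ to $\rho_0|_{\Gamma_0}$. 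After a preliminary conjugation I may therefore assume $\rho|_{\Gamma_0}=\rho_0|_{\Gamma_0}$.

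Next I would establish a dictionary between $\Gamma$-actions on $H/K$ extending $\rho_0|_{\Gamma_0}$ and finite subgroups of $\Homeo^+(M_0)$. Since $\Gamma_0\trianglelefteq\Gamma$, the image $\rho(\Gamma)$ normalizes $\rho_0(\Gamma_0)$ in $\Homeo^+(H/K)$, so $\rho$ descends to an injective homomorphism $\bar\rho:G\to\Homeo^+(M_0)$; two extensions $\rho,\rho'$ are conjugate in $\Homeo^+(H/K)$ iff the corresponding subgroups $\bar\rho(G),\bar\rho'(G)$ are conjugate in $\Homeo^+(M_0)$. Invoking the controlled topology input of \cite{CDK} and \cite{We2}, the relevant lifting obstruction vanishes: every finite subgroup of $\Homeo^+(M_0)$ abstractly isomorphic to $G$ and realizing the correct orbifold quotient $\Gamma\backslash H/K$ actually arises from some $\rho$. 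Thus conjugacy classes of proper discontinuous $\Gamma$-actions on $H/K$ biject with conjugacy classes of such finite subgroups of $\Homeo^+(M_0)$.

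I would then transport the trichotomy of Theorem 1.5 across this bijection. Every torsion element of $\Gamma$ lies in a conjugate of $K$ and hence has non-empty fixed-point set on $H/K$; the singular set $S_{M_0}$ of $\Isom^+(M_0)$ is just the image in $M_0$ of the union of these fixed-point sets, and has the same dimension. Consequently the conditions (a)(i), (a)(ii), (b), (c) of Theorem 1.5 translate verbatim to the claimed conditions in Theorem \ref{1.8}. Strong rigidity in case (a) follows because every maximal finite subgroup of $\Homeo^+(M_0)$ is then conjugate to $\Isom^+(M_0)\cong G$, which forces any order-$|G|$ subgroup to equal one of these unique conjugates. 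Local rigidity of every $\rho$ in case (b) follows from Theorem \ref{1.7} (itself a consequence of Corollary 1.6): failure of local rigidity would produce a continuous family of deformations and hence a continuum of conjugacy classes, which happens only in case (c).

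The main obstacle I anticipate is the second step. Descending $\rho$ to $\bar\rho$ is formal, but the ascent---realizing every abstract copy of $G$ in $\Homeo^+(M_0)$ as coming from a genuine proper discontinuous $\Gamma$-action on $H/K$---is an extension problem over the orbifold $\Gamma\backslash H/K$, and its solvability is precisely the ``additional ingredient'' referenced in the introduction, supplied by the controlled surgery machinery of \cite{CDK} and \cite{We2}.
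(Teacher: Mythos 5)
Your reduction to the closed manifold $M_0=\Gamma_0\backslash H/K$ with its $G=\Gamma/\Gamma_0$ action, using Farrell--Jones to normalize $\rho|_{\Gamma_0}=\rho_0|_{\Gamma_0}$ and covering-space theory to pass back and forth, is exactly the paper's ``second modification,'' and that part is sound. But the pivotal step --- ``transport the trichotomy of Theorem 1.5 across this bijection'' --- has a genuine gap. Theorem 1.5 counts conjugacy classes of \emph{maximal finite subgroups} of $\Homeo^+(M_0)$, and its case (c) is proved by producing a continuum of exotic actions of a \emph{cyclic} group $\bbz/p$. What Theorem 1.8(c) requires is a continuum of non-conjugate actions of the \emph{specific} group $G$ inducing the \emph{specific} outer action on $\Gamma_0$ determined by $\Gamma$. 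A continuum of pairwise non-conjugate $\bbz/p$-subgroups does not hand you this: those exotic cyclic actions need not extend to actions of $G$ at all, let alone to ones with the prescribed outer class. This is precisely the paper's ``first modification,'' which you omit: one works near points whose isotropy is exactly some subgroup $H\le G$, takes a semifree $H$-sphere $S$ with exotic fixed set (Ingredients 3.1/3.2), forms the induced $G$-space $(G\times S)/H$, and performs equivariant connected sum along a $G$-orbit, thereby modifying the full $G$-action rather than one cyclic piece. Without this, your case (c) (and the countable infinitude in case (b), which comes from Ingredient 3.4/[CDK] applied to the lattice $\Gamma$ itself) is unsupported.

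Two secondary points. First, you invoke the controlled surgery of [CDK] and [We2] to ``realize every abstract copy of $G$ as coming from some $\rho$,'' but that direction is elementary: an action of $G$ on $M_0$ inducing the given outer action on the centerless group $\Gamma_0$ lifts to an action of the (unique) extension $1\to\Gamma_0\to\ol\Gamma\to G\to 1$, which is isomorphic to $\Gamma$; no surgery is needed there, whereas [CDK] \emph{is} needed for the UNil/infinite-dihedral count in case (b). Second, your derivation of local rigidity in case (b) from ``a non-locally-rigid action would give a continuum of classes'' is not valid as stated: a failure of local rigidity produces nearby non-conjugate actions but not obviously uncountably many conjugacy classes. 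The paper instead gets local rigidity for discrete singular sets from Edmonds' theorem on local connectivity of spaces of group actions.
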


The paper is organized as follows. In \S 2, we prove Theorem 1.2. In \S 3, we give preliminaries for the proof of Theorem 1.5, which will be given in \S 4.   In this proof we depend crucially on the deep works of Farrell and Jones \cite{FJ1} \cite{FJ2} and Bartels and Lueck  \cite{ BL} related to the (famous) Borel conjecture as well as work  of \cite{CDK}.  In \S 5, we analyze manifolds of dimension 4, while in \S 6 we  prove Theorem 1.3.  Section 7 discusses topological rigidity of lattices and proves Theorem 1.7 and 1.8.

\begin{remark*} If one allows orientation reversing actions then if $\dim M\ge 7$ there is a trichotomy theorem; rigidity holds if the action is free or the $\dim M \equiv 1 \mod 4 $ or if $\dim M \equiv 3 (4)$ and the elements of order 2 act freely.  The proof of this is similar to the one we give below for Theorem 1.5.  We believe that the remaining cases, at least when $\dim M \neq 4$, work out similarly to that theorem.
\end{remark*}

\bigskip

\noindent {\bf Acknowledgment:} This work was partially done while the authors visited Yale University.  They are grateful to Dan Mostow for a useful conversation.
We are also grateful to NYU, the Hebrew University, ETH-ITS for their hospitality as well as to the NSF, ERC, ISF, Dr. Max R\"ossler, the Walter Haefner Foundation and the ETH Foundation, for their support.
\section{Proof of Theorem 1.2}

The proof of the theorem is in four steps:

\medskip

\noindent{\bf  Step I:} In [BeL],  M. Belolipetsky and the second named author showed that for every $n \ge 3$ and for every finite group $G$, there exist infinitely many closed, oriented,  hyperbolic manifolds $M = M^n(G)$ with ${\Isom}^+(M) \simeq G$.  More precisely, it is shown there that if $\Ga_0$ is the non-arithmetic cocompact lattice in $H = PO^+(n, 1)$ constructed in [GPS], then it has infinitely many finite index subgroups $\Ga$ with $N_H(\Ga)/\Ga\simeq G$.  The proof shows that $\Ga$ can be chosen so that $N_H(\Ga)$ is torsion free and moreover $N_H(\Gamma)/\Ga \simeq \Isom^+ (M) = \Isom(M)$ for $M = \Gamma \backslash \bbh^n$.  This implies that $G = N_H(\Ga)/\Ga$ acts on $M$ freely, a fact we will use in Step IV below.

Let $M = M^n(G) $ be  one of these manifolds, $\Ga = \pi_1 (M)$.  So $\Ga$ can be considered as a cocompact lattice in $\Isom^+ (\bbh^n) = PO^+(n, 1)$,  the group  of orientation preserving isometries of the $n$-dimensional hyperbolic space $\bbh^n$.

\medskip
\noindent{\bf  Step II:}  The Mostow Strong Rigidity Theorem [Mo] for compact hyperbolic manifolds  asserts that if $\Ga_1$ and $\Ga_2$ are torsion free cocompact lattices in $\Isom (\bbh^n)$, then every group theoretical isomorphism from $\Ga_1$ to $\Ga_2$ is realized by a conjugation within $\Isom(\bbh^n)$ (or in a geometric language, homotopical equivalence of hyperbolic manifolds implies an isomorphism as Riemannian manifolds).  Applying Mostow's theorem for the automorphisms of $\Ga = \pi_1 (M)$ implies that $\Aut(\Ga)$ can be identified with $N_{\Isom(\bbh^n)} (\Ga)$, the normalizer of $\Ga$ in $\Isom(\bbh^n)$.  Hence the  outer automorphism group $\Out (\Ga) = \Aut(\Ga)/\Inn(\Ga)$ of $\Ga$ is isomorphic to $N_{\Isom(\bbh^n)} (\Ga)/\Ga$ and hence also to $\Isom (M)$, which in our case is equal to $\Isom^+(M)$ by step I.

\medskip
\noindent{\bf  Step III:}  In [Bo], Borel showed that if $\Ga$ is a torsion free cocompact lattice in a simple non-compact Lie group $H$, with a maximal compact subgroup $K$ and associated symmetric space $X = H /K$, then every finite subgroup $F$ of $\Homeo(M)$ where $M = \Ga\backslash X$, is mapped injectively into $\Out (\pi_1 (M)) = \Out (\Ga)$ by the natural map.  If $F \le \Homeo^+(M)$, then its image is in $\Out^+(\Ga)$ which the kernel of the action of $\Out(\Ga)$ on $H^n(\Ga, \bbz)\simeq \bbz$, so $[\Out(\Ga):\Out^+ (\Ga)] \le 2$.
Borel's result is actually much more general; the reader is referred to that short paper for the general result and the proof which uses Smith theory and cohomological methods.

Anyway, applying Borel's result for our $M = M^n(G)$ finishes the proof  of the first part of Theorem 1.2.
In particular, one sees that in all these examples, $G = \Isom^+(M)$ acts freely on $M$ since the isometry group, in this case, is the group of covering transformations which act freely on $M$.

\medskip
\noindent{\bf Step IV:}
We have shown so far that whenever a finite group $F$ acts on $M$ as above, there is a natural injective homomorphism $F \hookrightarrow  \Out^+ (\pi_1 (M))\cong \Isom^+ (M)$.  Denote the image of $F$ in $\Isom^+(M)$ by $L$.  Our next goal is to show that $F$ is conjugate to $L$ within $\Homeo^+(M)$.
 For ease of reading we will call $M$ with the
action of $F$, $M'$, to avoid confusion.

There is actually an equivariant map $M'\toM$ that is a homotopy equivalence.
To see this,  note that $N_H(\Ga)$ is torsion free and hence so is ${\ol\Ga}$, the preimage of $L$ in $N_H(\Ga)$ w.r.t. the natural projection $N_H(\Ga)\to \Out(\Ga) = \Out(\pi_1(M))$. Similarly, let us consider
 all of the possible lifts  of all of the elements of $F$
to the universal cover, which form a group ${\ol \Ga}'$ (the orbifold fundamental group of $M'/F$, which we presently show  is the genuine fundamental group)
that fits in an exact sequence:
$$1\to\Ga (=\pi_1(M))\to{\ol \Ga}' \to F \to 1$$

 As $\Ga$ is centerless and $F$ and $L$ induce the same outer automorphism group,  it  follows  that ${\ol\Ga}'$ is also torsion free and as a corollary $F = {\ol\Ga}'/\Ga$ acts freely on $M'$.  Hence $M'/F$ is homotopy equivalent to $M/L$ as both have ${\ol \Ga}'\simeq {\ol\Ga}$ as their fundamental group.  By the Borel conjecture for hyperbolic closed manifolds (which is a Theorem of Farrell and Jones [FJ1] for $n\ge 5$ and of Gabai-Meyerhoff-Thurston [GMT] for $n=3$) the map $M'/F\to M/L$ is homotopic to a homeomorphism which preserves $\pi_1(M') = \pi_1(M)$, as did the original homotopy equivalence.
Since liftability in a covering space is a
homotopy condition, this homeomorphism can be lifted to the cover $ M'\to M$, producing a
conjugating homeomorphism between the actions. Theorem 1.2 is now proved.

In summary, the above proof is analogous to (and relies on)  the fact that Mostow rigidity gives a uniqueness of the isometric action (or in different terminology, the uniqueness of the Nielsen realization of a subgroup of $\Out(\Ga)$).  At the same time, the Farrell-Jones/Gabai-Meyerhoff-Thurston rigidity gives the uniqueness of the topological realization in the case of free actions.  We will see later that this freeness condition is essential.

\section{Some ingredients for the proof of Theorem 1.5}

The proof of Theorem 1.5 is based on  results, sometimes deep theorems, some of which are well-known and others  which might be folklore (or new).  We  present them in this section and use them in the next one.

\begin{ingr} For $v\ge 3$, there exists infinitely many non-simply connected homology spheres $\Sigma^v$, each bounding a contractible manifold $X^{v+1}$ such that the different fundamental groups $\pi_1(\Sigma)$ are all freely indecomposable and are non isomorphic to each other.  Moreover, $X \times [0,1]$ is a ball.
\end{ingr}
\begin{proof}  For $v > 4 $ this is very straightforward.  According to Kervaire \cite{Ker}, a group $\pi$ is the fundamental group of a $(PL)$ homology sphere iff it is finitely presented and superperfect (i.e.,  $H_1 (\pi) = H_2(\pi) = 0)$.   Moreover every $PL$ homology sphere bounds a $PL$ contractible manifold (this is true for $v \ge 4$, and for $v = 3$  in the topological category [Fr]). The product of a contractible manifold with $[0, 1]$ is a ball as an immediate application of the $h$-cobordism theorem (see [Mi1]).

 For $v = 3$, we could rely on the work of Mazur [Ma] in the $PL$ category, but would then need to use subsequent work on the structure of manifolds obtained by surgery on knots.  Instead,  as we will be working in the topological category, we  rely on [Fr] which shows that the analogue of all of the above holds topologically for $v = 3$, aside from the characterization of fundamental groups: however, using the uniqueness of the JSJ (\cite{JS}, \cite{J}) decomposition of Haken 3-manifolds, homology spheres obtained by gluing together nontrivial knot complements are trivially distinguished from one another.

 For $v = 4$, note that all the fundamental groups of the $v = 3$ case arise here as well: if $\Sigma^3$ is a homology sphere then $\partial(\Sigma^o \times D^2)$ is a homology 4-sphere with the same fundamental group (where $\Sigma^o$ denotes, as usual, the punctured manifold).
\end{proof}

We also need:
\begin{ingr} For $m-1 > c_0\ge 3$ and every orientation preserving linear free action $\rho$ of $G = \bbz_p$ on $S^{c_0}$ (in particular, $c_0$ is odd), there exist an infinite  number of homology spheres $\Sigma^{c_0}$ with non-isomorphic fundamental groups and with a $G = \bbz_p$-free action satisfying: For each such $\Sigma $ there exists an action of $G$ on $B^m$ fixing $0 \in B^m$ such that
\begin{enumerate}
\item{} The action of $G$ on $S^{m-1}$ is isomorphic to the linear action $\rho \oplus$ Identity, and
\item{} the local fundamental group $\pi^{\hbox{local}}_1 (B^m\setminus F, 0)$ is isomorphic to $\pi_1(\Sigma)$, when $F$ is the fixed point set. Moreover, this action is topologically conjugate to a PL action on a polyhedron.
\end{enumerate}
\end{ingr}

Let us recall what is meant by the local fundamental group: This is the inverse limit $\lim\limits_{\substack{\longleftarrow\\ \a}}$
$\, \pi_1(U_\a, x_\a)$ where the $\{ U_\a\}$ is a sequence of connected open neighborhoods converging down to $0$, and $x_\a \in U_\a\setminus F$ is a sequence of base points.  Note that by the Jordan Curve Theorem, $U_\a\setminus F$ is connected as codim $F\ge 2$.  Also the induced maps are well defined up to conjugacy, so the limit is well defined.

\begin{proof} For every homology sphere $\Sigma'$ of odd dimension $c_0$, let $\Sigma = p \Sigma' = \Sigma'\# \Sigma'\#\cdots\# \Sigma'$ $p$ times.  We now give $\Sigma$ a free $\bbz_p$ action, by  taking connected sum along an orbit of the free linear action on $S^{c_0}$ with the permutation action on $p\Sigma'$.  The action on $S^{c_0}$  bounds a linear disk $D^{c_0 + 1}$.  One can take the (equivariant) boundary connect sum of this disk with $pX, X$ the contractible manifold that $\Sigma'$ bounds to get a contractible manifold $Z$ with  $\bbz_p$ action fixing just one point which is locally smooth at that
point and has the given local representation $\rho$ there.

For motivation, consider now $c(\Sigma) \times B^{m - c_0 - 1}$ where $c(\Sigma)$ is the cone of $\Sigma$.  It is a ball, by Edwards's theorem \cite{D} (combined with the $h$-cobordism theorem), and has an obvious $\bbz_p$ action as desired except that the action on the boundary is not linear: the fixed set is $S^{m-c_0 - 2}$ but it is not locally flat.

Instead, let $Z$ be the locally linear contractible manifold bounded by $\Sigma$, constructed above.  The manifold $Z \cup (\Sigma \times [0, 1]) \cup Z$ is a sphere (by the Poincar{\'e} conjecture).  If one maps this to $[0, 1]$ by the projection on $\Sigma \times [0, 1] $ and extending by constant maps on the two copies of $Z$, then the mapping cylinder of this map $\varphi:Z \cup (\Sigma \times [0, 1]) \cup Z \to [0, 1]$ is a manifold, again by Edwards's theorem.  It has an obvious $\bbz_p$ action with fixed set an interval.  The action on the boundary sphere is locally smooth with two fixed points, so that an old argument of Stallings \cite{St2}  shows that it is topologically linear with $\rho $ as above.  Note that the nonlocally flat points of the fixed point set correspond to the points where the local structures is $ c(\Sigma) \times [0, 1]$; hence  the local fundamental group is $\pi_1(\Sigma)$, as required.  This proves the result for the case $m = c_0 + 2$.

For $m > c_0+2$, one can spin this picture: Map $(S^{m-c_0 - 1 } \times Z^{c_0 + 1}) \cup (B^{m-c_0 - 1} \times \Sigma)$ to $B^{m-c_0}$ in the obvious way and again the mapping cylinder produces a ball with locally linear boundaries and desired fixed set. This time the linearity of the boundary action follows from Illman \cite{I}.
\end{proof}

We will also need the following group theoretical result:

\begin{prop} If $\{ \pi_i\}^\infty_{i = 1} $ and $\{ \pi'_i\}^\infty_{i = 1}$ are two infinite countable families of non-isomorphic freely  indecomposable groups such that
$\mathop{\ast}\limits^\infty_{i = 1} \pi_i$  is isomorphic to $\mathop{\ast}\limits^\infty_{i = 1} \pi'_i$, then after reordering for every $i$, $\pi_i$ is isomorphic to $\pi'_i$.
\end{prop}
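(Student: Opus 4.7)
The plan is to prove a Grushko-style uniqueness result for these (infinite) free product decompositions by combining the Kurosh subgroup theorem with Bass-Serre tree techniques. Set $G = \mathop{\ast}\limits^\infty_{i=1} \pi_i$ and $G' = \mathop{\ast}\limits^\infty_{i=1} \pi'_i$, and fix an isomorphism $\phi : G \to G'$. First I would apply the Kurosh subgroup theorem to $\phi(\pi_i) \leq G'$: its canonical decomposition is a free product of a free group with intersections of $\phi(\pi_i)$ with conjugates of the $\pi'_j$. Because $\pi_i$ is freely indecomposable and nontrivial, at most one of these factors can be nontrivial; provided $\pi_i \not\cong \mathbb{Z}$, this factor must be a conjugate-intersection piece rather than a free one, so $\phi(\pi_i) \leq g_i\, \pi'_{\sigma(i)}\, g_i^{-1}$ for some index $\sigma(i)$ and some $g_i \in G'$. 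The pairwise non-isomorphism hypothesis ensures that $\mathbb{Z}$ occurs at most once on each side, so that exceptional case is handled by pairing the unique $\mathbb{Z}$-factor across the isomorphism (a hyperbolic action on the Bass-Serre tree cannot be conjugated into a vertex stabilizer). Applying the same argument to $\phi^{-1}$ produces indices $\tau(j)$ with $\phi^{-1}(\pi'_j) \leq h_j\, \pi_{\tau(j)}\, h_j^{-1}$.

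Next I would show that $\sigma$ and $\tau$ are mutually inverse bijections. Composing the two containments, $\pi_i$ is conjugate into $\pi_{\tau\sigma(i)}$ inside $G$. In the Bass-Serre tree $T$ associated with the free product decomposition $G = \mathop{\ast}\limits^\infty_{i=1} \pi_i$, each factor $\pi_k$ is the stabilizer of a unique vertex, and edge stabilizers are trivial; hence any nontrivial element lies in the stabilizer of at most one vertex, and two conjugates of distinct factors intersect trivially. So $\pi_i$ cannot have nontrivial image in any conjugate of $\pi_{i'}$ with $i' \neq i$, forcing $\tau\sigma(i) = i$; by symmetry $\sigma\tau(j) = j$.

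Finally I would upgrade the containment to equality, which yields the isomorphism $\pi_i \cong \pi'_{\sigma(i)}$. Write $A = \phi(\pi_i)$ and $B = \pi'_{\sigma(i)}$; we have $A \leq g B g^{-1}$ and, symmetrically from the reverse direction, $B \leq k A k^{-1}$ for some $k \in G'$. In the Bass-Serre tree $T'$ of $G'$, $B$ stabilizes a unique vertex $v$, so $A$ fixes $g \cdot v$. Then $k^{-1} B k \leq A$ also fixes $g \cdot v$, but $k^{-1} B k$ is the stabilizer of its unique fixed vertex $k^{-1} \cdot v$; so $k^{-1} \cdot v = g \cdot v$, whence $kg \in B$. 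A short rearrangement then gives $B \leq g^{-1} A g$, and combined with the reverse containment this yields $A = g B g^{-1}$ as desired. The main obstacle is the Bass-Serre bookkeeping in the last two paragraphs — particularly handling the possible infinite-cyclic factor and promoting containment to equality — while the Kurosh input itself is nearly immediate once the freely indecomposable hypothesis is invoked.
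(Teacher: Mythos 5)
Your proof is correct and follows essentially the same route as the paper's: both rest on the Bass--Serre tree of the free product decomposition, the fact that a freely indecomposable factor must fix a vertex (i.e.\ be conjugate into a factor of the other decomposition), and the observation that distinct factors and their conjugates intersect trivially, which upgrades the resulting containments to equalities. The only real difference is that you explicitly flag and treat the exceptional infinite-cyclic factor (which could act hyperbolically rather than fix a vertex), a point the paper's proof silently glosses over; this case does not arise in the paper's application, where the factors are nontrivial superperfect groups.
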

\begin{proof}
Recall that by the Bass-Serre theory, a group $\Gamma$ is a free product $\mathop{*}\limits^\infty_{i = 1} \pi_i$ if and only if $\Gamma$ acts on a tree $T$ with trivial edge stabilizers and a contractible quotient  and with one to one correspondence  between the vertices of $T$ and the conjugates of $\pi_i (i \in \bbn)$ in $\Gamma$, where each vertex corresponds to its stabilizer.  Now assume $\Gamma \simeq \mathop{*}\limits^\infty_{i = 1} \pi_i$  and also $\Gamma \simeq \mathop{*}\limits^\infty_{j = 1} \pi'_j$ with the corresponding trees $T$ and $T'$.  Fix $i \in\bbn$, as $\Gamma$ acts on $T'$ with trivial edge stabilizers and $\pi_i$ is freely indecomposable, $\pi_i$ fixes a vertex of $T'$.  Hence there exists $j \in \bbn$ s.t. $\pi_i \subseteq \pi_j^{'\tau}$.  In the same way $\pi^{'\tau}_j$ is a subgroup of some $\pi^\delta_k$ for some $\delta \in \Gamma$.  This means that $\pi_i \subseteq \pi^\delta_k$.  But in a free product  a free factor cannot have a non-trivial intersection with another factor or with a conjugate of it.  Moreover, if $\pi_i \cap \pi^\delta_i \neq \{ e \}$, then $\pi^\delta_i = \pi_i$.
Indeed, if $g$ is in this intersection,  it fixes the fixed vertex of $\pi_i$ as well as that of $\pi^\delta_i$,  hence also the geodesic  between them, in contradiction to the fact that $\Gamma$ acts with trivial edge stabilizers.

We deduce that $\pi_i \subseteq \pi_j^{'\tau} \subseteq \pi_i$ and hence $\pi_i = \pi_j^{'\tau}$.

This shows by symmetry that  the collections $\{ \pi_i\}$ and $\{ \pi'_j\}$ are identical.
\end{proof}

Finally, let us recall the famous Borel conjecture which asserts that two aspherical manifolds which are homotopy equivalent are homeomorphic.  Moreover, the original homotopy equivalence is homotopic to a  homeomorphism.  This conjecture of Borel was proved by Farrell and Jones \cite{FJ2} for  the locally symmetric manifolds $M$ discussed in this paper, if $\dim (M) \ge 5$ and by Gabai-Meyerhoff-Thurston \cite{GMT} for the case of $\dim (M) = 3$.  This, in particular,   says that there is a unique cocompact proper topological action of $\Ga = \pi_1(M)$ on the symmetric space $H/K$ for any uniform torsion-free lattice.

But if ${\ol \Ga} \tle \Ga$ is a finite extension with torsion, then the situation is more delicate. In fact, as we will see, there is no rigidity anymore in the topological setting and ${\ol \Ga}$ may have many inequivalent actions on $H/K$.  In other words, the ``equivariant Borel conjecture" is not true.  It is of interest (though not really relevant to the goals of this paper) to compare this with the analogous situation in the setting of $C^*$-algebras, where the analogue of the Borel conjecture is the Baum-Connes conjecture.  This latter conjecture is known to be true in many situations, even in its equivariant form, i.e., for groups with torsion,  while the equivariant Borel conjecture fails in some of those cases (\cite{CK}, \cite{Q}, \cite{We1}, \cite{We2}).   The failure is due to the non-vanishing of the Nil and the first author's UNil groups (see \cite{BHS}  and \cite{Ca})\footnote{More accurately, these algebraic reasons explain the failures of the equivariant Borel conjecture relevant here.  \cite{We2} gives other sorts of examples when the singular set is high dimensional.}.  The latter is the source of non-rigidity in the case of isolated singularities.

The specific outcome relevant for our needs is the following:

\begin{ingr}
If $\triangle$ is a lattice containing $\Ga$ a torsion free uniform lattice in $H$  as a normal subgroup of finite index, so that the normalizer of any finite subgroup is finite, then the proper discontinuous actions of $\triangle $ on (the topological manifold) $H/K$ are in a one to one correspondence with the action of $\triangle/\Ga$ on $\Ga\setminus H/K$, inducing the given outer automorphism of $\Ga$.  If $\dim (H/K ) \neq 4$ then this action is unique unless $\dim (H/K)$ is 2 mod 4 and $\triangle$ contains an infinite dihedral subgroup. In that case, the number of conjugacy classes is infinite and countable.
\end{ingr}

\begin{proof} Any proper action of $\triangle$ is automatically free when restricted to $\Ga$ (by torsion freeness).  The action is cocompact, because if it were not, this quotient space would show that the cohomological dimension of $\Ga$ is less than $\dim(H/K)$ which cannot happen, since there is a cocompact  action.  As a result, the Borel conjecture, proved for uniform lattices by \cite{FJ2}, shows that all of these actions are standard for the $\Ga$ subgroup, i.e.,  equivalent to the original action of $\Ga$ on $H/K$.

Note that this argument did not use the fact that the manifold on which $\triangle$ acts is $H/K$;  it would apply automatically to any contractible manifold.  This shows that such a manifold is automatically Euclidean space, as follows quite directly from \cite{St1}.

With this preparation, the result now follows from [CDK] together with \cite{BL}.  The condition on normalizers is equivalent to the discreteness of the fixed point set.  (For every finite group $G$, $N_H (G)/G$ acts properly on the fixed set of $G$ on $H/K$.)  Assuming the Farrell-Jones conjecture for $\triangle$, which is a theorem of \cite{BL}, \cite{CDK} gives a description of the set of actions in terms of UNil groups and maximal infinite dihedral subgroups $\triangle$.\footnote{Essentially the argument shows that, unlike what is done in the next section in the situation where the singular set is positive dimensional, the action of $\triangle/\Ga$ is equivariantly homotopy equivalent to the linear one.  Since the singular set is very low dimensional, one can promote this to an isovariant homotopy equivalence.  At that point, surgery theory can be used to reduce this problem to issues in $K$-theory and $L$-theory that are handled by the Farrell-Jones conjecture.  It turns out that the $K$-groups of $\triangle$ are the limit of those of the finite subgroups of $\triangle$; however, because of UNil, the analogous statement is not true for $L(\triangle)$ and the calculation, {\sl in this case}, reduces to the infinite dihedral subgroups.} A lattice that contains an infinite dihedral subgroup, contains a maximal one (by discreteness: the $\bold Z$ subgroups cannot keep growing in a nested sequence, since they correspond to shorter and shorter closed geodesics and a compact manifold has a positive injectivity radius).

\end{proof}

\section{Proof of theorem 1.5}

	For Theorem 1.2, we have depended on the fact that the construction of [BeL] produced free actions.  The constructions we presently describe show that whenever a manifold $M$ of  dimension $\ge 5$ has  an (orientation preserving) action whose singular set (i.e.,  the union of the fixed sets of all nontrivial subgroups) is positive dimensional, there are actually continuously  many actions on $M$ that induce the same outer automorphisms of their fundamental group but are not topologically conjugate.

To prove part (c) of the Theorem it suffices to prove it for cyclic group, i.e., that if a cyclic group $G = \bbz_p,\; p $ prime, has a positive dimensional fixed point set, then there is a continuum  of such non-equivalent actions.

	  Let $V$ be a component of the fixed set of the action of $G$.  Let $v = \dim(V)$ and let $\rho$ be the normal representation of $\bbz_p$ on $\bbr^c$  ($c = m-v$, the codimension of $V$ in $M$).  If $v > 2$, take $\Sigma^v$ and $X^{v + 1}$ as in Ingredient 3.1.  The product $X\times D(\rho)$ is a ball  with an action of $\Z_p$ whose fixed set is $X$.  The action of $G$ on the boundary $\partial(X\times D^c(\rho)) = (X \times S^{c-1}(\rho)) \cup_{ (\Sigma\times S^{c-1}(\rho))} (\Sigma \times D(\rho)$ has $\Sigma $ as its fixed set (as $G$ has no fixed points on $S^{c-1}(\rho)$ and a unique fixed point - the origin  - in the disk $D(\rho))$.  The normal representation to this fixed set is still $\rho$.   We can take equivariant connected sum of $M$ with this $G$-sphere to obtain a new $G$-action on $M \# S^{c + v} \simeq M$ whose fixed set is $V\# \Sigma$.  Since the fixed set of the new action  does not have the same fundamental group as $V$ (e.g., by Grushko's theorem as $\pi_1(\Sigma)$  was assumed nontrivial), it is not conjugate to the original action. Of course  it induces the same outer automorphism on $\pi_1$. Notice that we can think of this procedure as being a local equivariant insertion; near a point $x \in V$  we modify the action of $G$ only in a small specified ball.  This procedure can be done any finite number of times to get countably  many non-conjugate actions.

In fact, we can even get a continuum of actions of $G$ on $M$. Let us first make a definition: For a finite group $G$ acting topologically on a manifold $M$, we say that $x$ in $M$ is a  {\sl decent fixed point}, if the action of $G$ in some open neighborhood of $x$ is topologically conjugate to a simplicial action on a polyhedron.
 Now, apply the process above with smaller and smaller disjoint balls  in $M$ converging to some point $x_0$ using any choice of $\pi_1(\Sigma)$'s provided by Ingredient 3.1.  The outcome is a copy of $M$ with an action of $G$ on it with a fixed point set $W$ containing $x_0$, which is the unique non-decent point on $M$. This set $W$ is not a manifold, but $W \setminus \{ x_0\}$ is.   Moreover, the fundamental group of $W\setminus \{ x_0\}$  is isomorphic to the free product of the fundamental group of the  original set $V$ with the free product of the infinitely many different $\pi_1(\Sigma)$'s which have been used.   Now, if two such constructions lead to equivalent actions of $G$ on $M$, then this unique non-decent point of the fixed point set should be preserved.  Proposition  3.3  would imply that the two collections of $\pi_1 (\Sigma)$'s are equal.
An infinite countable set has a continuum number of subsets and  we can therefore get a continuum number of non-conjugate actions of $G$.  This proves case (c) for $v \ge 3$.

In the case of $v \ge 3$, we replaced balls $B^m$ (with isometric local $G$-action) of $M$, by copies of the same ball with new $G$-actions.  The new action preserved the original normal action $\rho$ but changed dramatically the fixed point set.  For $v = 1 $ or $2$, we are not able to argue like that (for  lack of suitable $\Sigma$'s as above).  Instead we will keep the fixed point set in $B^m$ but deform the normal action $\rho$.  In fact,  this second method works whenever $c = m - v > 2$, so altogether the two methods cover all cases if $\dim M \ge 5$.

One  now imitates the procedure described before to modify the original isometric action of $G$ on $M$ at a ball around a fixed point by replacing it with some $B^m$  as in Ingredient 3.2.  The resulting action is not equivalent to the original one as $\pi_1(\Sigma)$ can be recovered from it as the local fundamental group at a fixed point.  Doing this procedure any finite number of times with different $\Sigma$'s each time, gives us an infinite countable collection of non conjugate actions.  To get a continuum, we argue as before.  In fact, this version is easier: The family  of $\pi_1(\Sigma)$'s used can be recovered from the action of $G$ on $M$ as being exactly the collection of non-trivial local fundamental groups at decent fixed points of $G$.

This finishes the proof of part c of Theorem 1.5.

Now, the proof of part (a) is exactly the same as Step IV in the proof of Theorem 1.2.

Part (b) can be deduced from Ingredient 3.4.  The existence of a dihedral subgroup in  ${\ol\Ga}$ is equivalent to the existence of an involution in $G$ fixing a point in $M$.  This is indeed the case: If ${\ol \Ga}$ contains a dihedral group, it contains an element of order 2.  This element has a fixed point on $H/K$ and hence also on $M$.  In the other direction: assume $\tau \in G$ is an involution fixing a point $p$ of $M$.  Then by \cite{CF}, $\tau$ has at least a second fixed point $q$. Let $\a$ be a geodesic from $p$ to $q$, then $\tau(\a)$ is another such geodesic and indeed, $\a \cup \tau (\a)$ is a closed geodesic $\gamma \in \pi_1 (M, p) = \Ga$.  The group generated by $\gamma$ and $\tau$ is a dihedral group.

This finishes the proof of Theorem 1.5 for $\dim M \ge 5$.

To prove Theorem 1.5 for dimension 3 observe that case (b) does not occur; since the fixed point set is of even codimension. Now, case (a) is exactly as before, with this time the work of Gabai-Meyerhoff-Thurston [GMT] replacing the work of Farrell and Jones.

For part (c), we use the same procedure of replacing a ball around one fixed point by an exotic action. This time the work of Bing [Bi], provides us with uncountably many actions of $G = \bbz_p$ on $B^3$.  (Bing discusses $\bbr^3$, but his construction clearly works on $B^3$ and produces actions with given linear $G$ action on $\partial B^3$.) As the actions are distinguished by the structures of this non-locally flat set as a subset of the line, they remain inequivalent in any manifold.
$\square$

\section{The case of $\dim M = 4$}

Theorem 1.5 may hold true in dimension 4 as well, but we can only prove  part (c) of it, namely:
\begin{thm} Let $M$ be a locally symmetric irreducible manifold of dimension 4, with a $G = \bbz_p$ faithful isometric action whose fixed point set $V$ is positive dimensional.  Then $G$ has a continuum number of inequivalent topological actions on $M$.
\end{thm}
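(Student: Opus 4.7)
Since $\dim M = 4$ and the orientation-preserving hypothesis forces the fixed set $V$ to have even codimension, the assumption $\dim V \ge 1$ gives $\dim V = 2$ and normal codimension $c = 2$. Neither of the two constructions used to prove part (c) of Theorem 1.5 applies directly: Ingredient 3.1 required $v \ge 3$, while Ingredient 3.2 required $c_0 \ge 3$. The plan is to mimic the local insertion strategy of Section 4, supplying the exotic local ingredients from $2$-knot theory rather than from homology spheres of dimension $\ge 3$.

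\textbf{Step 1 (exotic local models from $2$-knots).} I will exhibit an infinite family of topological $\bbz_p$-actions $(S^4,\alpha_i)$, each with a $2$-sphere fixed set $F_i$ carrying the standard linear normal representation at every fixed point, but with pairwise non-isomorphic, freely indecomposable knot groups $\pi_i = \pi_1(S^4 \setminus F_i)$. The source is cyclic branched covers: if $K \subset S^4$ is a $2$-knot whose $p$-fold cyclic branched cover of $S^4$ along $K$ is a homotopy $4$-sphere, then Freedman's topological Poincar\'e theorem promotes it to a genuine $S^4$, and the deck transformation gives the desired $\bbz_p$-action on $S^4$ with fixed set a topological $2$-sphere of the required local type. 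Zeeman twist-spins of non-trivial classical prime knots supply such an infinite family, with freely indecomposable, pairwise non-isomorphic knot groups.

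\textbf{Step 2 (assembling the continuum).} Pick $x_0 \in V$ and a sequence of pairwise disjoint small balls $B_j \subset M$ centered on points of $V$ and converging to $x_0$. For every subset $S \subseteq \bbn$ equipped with an injection $f\colon S \to \bbn$, form a new $\bbz_p$-action $\rho_{S,f}$ on $M$ by equivariant connected sum on each $B_j$, $j \in S$, with $(S^4,\alpha_{f(j)})$, and by leaving the original action on $B_j$ untouched for $j \notin S$. The underlying manifold remains $M$; the fixed set agrees with $V$ away from $x_0$, is modified by $2$-knotted connected sum near each inserted $B_j$, and has $x_0$ as its unique non-decent fixed point. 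Exactly as in Section 4, the local fundamental group of the complement of the fixed set at each inserted $B_j$ recovers $\pi_{f(j)}$, while on a decreasing sequence of punctured neighborhoods of $x_0$ the complement's fundamental group stabilizes to the free product $\ast_{j \in S} \pi_{f(j)}$. A topological conjugacy between $\rho_{S,f}$ and $\rho_{S',f'}$ must carry the unique non-decent fixed point to itself and identify these invariants; Proposition 3.3 then forces $\{\pi_{f(j)} : j\in S\} = \{\pi_{f'(j)} : j\in S'\}$ as multisets, so distinct subsets of $\bbn$ produce non-conjugate actions, giving continuum many.

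\textbf{Main obstacle.} Step 1 is where dimension $4$ genuinely differs from the $\dim M \ge 5$ case. In higher dimensions Kervaire's construction and the $h$-cobordism theorem produce exotic local models with essentially arbitrary superperfect local fundamental group directly (Ingredients 3.1 and 3.2), and the knot-theoretic nature of the fixed set plays no role. In dimension $4$ neither tool is available, and the exotic local models must instead be manufactured from $2$-knots via branched covers, with Freedman's topological Poincar\'e theorem rescuing the homeomorphism type of the total space. Establishing that the family of $2$-knots so obtained has freely indecomposable, pairwise non-isomorphic knot groups -- precisely what Proposition 3.3 consumes at the end of Step 2 -- is the new technical content particular to dimension $4$.
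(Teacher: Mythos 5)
Your Step 1 is essentially the paper's Theorem 5.2: the exotic local models are built from Zeeman's twist-spins via Giffen's branched-cover/monodromy-root construction, with Freedman's work supplying the topological Poincar\'e conjecture. The genuine gap is in Step 2, in the choice of distinguishing invariant. A twist-spun $2$-knot is a locally flat $S^2\subset S^4$, so after the equivariant connected sum the fixed set is still locally flat at every decent point and the local fundamental group (the inverse limit over shrinking neighborhoods of a point, as defined after Ingredient 3.2) is just $\bbz$ there; it does not ``recover $\pi_{f(j)}$.'' The knot group is a global invariant of the pair $(S^4,F_i)$, and inside $M$ the complement of the modified fixed set has fundamental group polluted by $\pi_1(M)$ and $\pi_1(V)$. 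The paper says this explicitly (``neither the fundamental group of the fixed set nor the local fundamental group distinguishes the two actions'') and instead passes to the universal cover $\tilde M\cong\bbr^4$, where the unmodified lifted fixed set is a standard $\bbr^2\subset\bbr^4$ with complement of fundamental group $\bbz$, while the modified one has complement whose fundamental group is an infinite amalgamated product of copies of $\pi_1(N_i)$ over $\bbz$ (the meridian).

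This points to the second problem: connected sum of $2$-knots amalgamates the knot groups over the meridian $\bbz$; it does not form a free product. So the group you propose to read off near $x_0$ is an iterated amalgamated product, and Proposition 3.3 does not apply to it; ``pairwise non-isomorphic, freely indecomposable knot groups'' is not the right hypothesis to extract anything from it. The paper's workaround is to pass to the commutator subgroup (the kernel of the map to $\bbz$), which \emph{is} an infinite free product --- but of the fiber groups $\pi_1(F_i)$ of the fibration of the knot complement, not of the knot groups --- and then to apply the Kurosh subgroup theorem together with the strong condition (3) of Theorem 5.2 that $\pi_1(F_{j'})$ does not embed as a subgroup of $\pi_1(F_i)$ for $i\neq j'$ (arranged by taking Brieskorn fibers of twist-spun torus knots with distinct prime parameters, following Milnor). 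That non-embedding condition, rather than mere non-isomorphism, is the essential input your proposal is missing. Relatedly, the paper's continuum construction inserts infinitely many copies of the \emph{same} knot accumulating at each $x_i$, $i\in\Omega$, precisely to manufacture indecent points that any conjugating homeomorphism must permute; in your construction the individual insertions sit at decent points and need not be matched up by a conjugacy, so all the information must be read off at the single point $x_0$, where the free-product bookkeeping above breaks down.
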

\begin{proof} First note that $\dim V = 2$ as the codimension must be even.  The Smith conjecture asserts that if $\bbz_p$ acts topologically on $S^n$, $n \ge 3 $ with a smooth $S^{n-2}$ as its fixed point set then $S^{n-2}$ is isotopic to the unknot.  While this conjecture is true for $n = 3$, it  turns out to be false when $n \ge 4$  as we now discuss.
\begin{thm} There exist an infinite number of $G = \bbz_p$ actions $\rho_i, \, i \in \bbn$  on $S^4$ satisfying:
\end{thm}
\begin{enumerate}
\item{} \emph{The fixed point set is a non trivial knotted $S^2 \subset S^4$.}
\item{} \emph{The complement of the knot is a 4-dimensional manifold $N_i$,  which fibers over the circle, with fibres $F^0_i$, each a 3-dimensional manifold with boundary $S^2$.  Once this $S^2$ is filled by a ball,  the resulting closed manifold $F_i$  is irreducible.}
    \item{} \emph{For every $i\! \neq \! j,\;  \pi_1 (F_i)$ is not isomorphic to a subgroup of $\pi_1 (F_j)$.}
    \end{enumerate}

    \smallskip

    Note that $\pi_1 (F_i) = \pi_i (F^0_i)$ and this is a normal subgroup of $\pi_1(N_i)$, equal to $[\pi_1(N_i), \pi_1(N_i)]$ with the quotient isomorphic to $\bbz$.  Also, as $F_i$ is irreducible, $\pi_1 (F_i)$ is freely indecomposable.  We postponed the proof of Theorem 5.2, using it first to prove 5.1.

    Given $x \in V\subset M$, we will replace a punctured sphere $S^4$ around $x \in M$  (with its $G$-action) by a punctured $S^4$ around a fixed point provided by Theorem 5.2.  (We can adjust the action  on the sphere to have the same normal representation as that of $V$ just by changing the generator of $G = \bbz_p$.)  More precisely, as in the proof of Theorem 1.5, we replace $M$ by $M \# S^4$, but the latter is homeomorphic to $M$.  The new fixed point set is $V\# S^2$.

    This time, neither the fundamental group of the fixed set nor the local fundamental group distinguishes the two actions.  Thus we will argue slightly differently:  Look at the universal cover $\tilde M$ of $M$.  Take $\tilde V$ to be one of the  components of the lift of $V$ in $\tilde X$ containing a lift of $x$ (actually $\tilde V$ is the universal cover of $V$ since $\pi_1 (V)$ injects into $\pi_1(M))$. The complement of it has fundamental group isomorphic to $\bbz$ (as by Hadamard's Theorem this is diffeomorphic to the linear inclusion $\tilde V \simeq \bbr^2 \subset \bbr^4 \simeq \tilde M$, whose complement is homotopy equivalent  to a circle).  On the other hand, if $\tilde W$ is a lift of $W$ (= the fixed points of the modified action) then the fundamental group of its complement is an infinite amalgamated free product of $\pi_1(N_i)$ with itself amalgamated along $\bbz $ (generated by the meridian).  This group is  certainly not isomorphic to $\bbz$ and hence the two actions are not equivalent.  Repeating this with the different $N_i$'s gives us countably many inequivalent actions, as the different $\pi_1 (N_i)$'s are not isomorphic to each other. (These groups have $\pi_1 F_i * \pi_1 F_i * \dots $ as their commutator subgroups.)

    To get a continuum number, we argue as follows:  Let $\Omega$ be an infinite subset of $\bbn$.  For every fixed $i \in \Omega$, apply the procedure above infinitely many times around disjoint balls in $M$ converging to a point $x_i \in M$.  Do it in such a way that the set $\{ x_i\}_{i \in \Omega}$ has a unique limit point $x_0$.

    We got therefore a new action on $G$ on $M$ depending on $\Omega$.  We want to show that different $\Omega$'s lead to non-equivalent actions.  Indeed, the indecent points are $\{ x_i\}_{i \in\Omega}$ as well as $x_0$ (which is a unique limit point of the indecent points). Now if two families $\Omega$ and $\Omega'$ lead to equivalent actions then the conjugating   homeomorphism takes, after reordering, $x_i, i \in \Omega$ to $x_j', j' \in \Omega'$.  Looking, as before,  at the neighborhood of a lift of $x_i$ (and $x_{j'}$,)  in the universal cover, we deduce that $\pi_1 (F_{j'})$ is a subgroup of the infinite amalgamated product of infinitely many copies of $\pi_1(N_i)$. In fact it is in the (unique) kernel of the map from this group onto $\bbz$.  Thus $\pi_1 (F_{j'})$ is a subgroup of this kernel which is just a free product of infinitely many copies of $\pi_1 (F_i)$.  By the Kurosh Subgroup Theorem every subgroup of this kernel is a free product of subgroups of $\pi_1 (F_i)$ and of a free group.  As $\pi_1 (F_{j'})$ is freely indecomposable  (since $F_{j'}$ is irreducible) we deduce that $\pi_1 (F_{j'})$ is isomorphic to a subgroup of $\pi_1 (F_i)$.  By part (3) of Theorem 5.2 this implies $i = j'$ and hence $\Omega = \Omega'$ and Theorem 5.1 is now proven.
    \end{proof}

Let us now prove Theorem 5.2.

\begin{proof}  There are many ways of doing this construction.  We rely essentially on the work of \cite{Gi} and \cite{Z} and inessentially on work of \cite{Mi2} to choose an explicit method.  In \cite{Z}, Zeeman modified Artin's spinning construction of knots in $S^4$ to twist spinning: for a knot $K$ in $S^3$ the $q$-twist spin of $K$ is a knot in $S^4$ whose complement fibers over the circle, and whose fiber is the $q$-fold branched cover of $S^3$ (punctured) branched over $K$.  Its monodromy is exactly the deck transformation.  If $p$ is prime to $q$, the monodromy has a $p$-th root, which can be used to build a $\bbz_p$
action on the complement.  Using the 4-dimensional Poincar{\'e} conjecture [Fr], Giffen observes\footnote{
Giffen's paper also shows that for $p$ odd, one can avoid using the Poincar\'{e} conjecture (which was not known in any category at the time that paper was written).} that this action can be extended to one on the sphere with the twist spun knot as fixed set.

If one starts with $(r, s)$ torus knots, one obtains the Brieskorn manifold associated to $(q, r, s)$ as the closed fibers.  These groups, as observed in \cite{Mi2} are central extensions of the $(q, r, s)$-triangle groups.  Their quotients by their centers have as torsion exactly the cyclic groups of order $\{ q, r, s\}$; then by, for example,  letting $q,r,s$ run through primes these groups do not embed in one another.
\end{proof}


\section{The case of $\dim M = 2$}

The phenomena in dimension 2  genuinely differ from those  in higher dimension.
Let us note that Step I and Step III in the proof of Theorem 1.2 work equally well in dimension 2.  In Borel's result there is no assumption on the dimension.  The paper \cite{BeL} assumes $n\ge 3$, but the result is true also for $n=2$.  In fact, it was proved earlier by Greenberg [Gr] with the following elegant argument:  He showed that in the Teichm\"uller space classifying the hyperbolic structures on a given surface $S_g$ of genus $g$, or better yet, classifying conjugacy classes of cocompact lattices of $PSL_2(\bbr)$ isomorphic to $\pi_1(S_g)$, almost every such lattice is maximal and non-arithmetic.  Hence by Margulis' criterion for arithmeticity, it is equal to its commensurability group. Now given a finite group $G$, choose $g$ large enough so that $\pi_1(S_g)$ is mapped onto $G$ (this is possible since $\pi_1(S_g)$ is mapped onto $F_g$, the free group on $g$ generators, so taking $g$ greater or  equal to the number of generators of $G$ will do).  Now, let $\Ga$ be a non-arithmetic maximal lattice with epimorphism $\pi : \Ga \twoheadrightarrow G$ with kernel $\land$.  Then $N_{PSL_2(\bbr)}(\land) = \Ga$ and so $\Isom^+(\Sigma) = G$ for $\Sigma = \land \backslash \bbh^2$,  as needed.

Moreover, it is also true that given a surface $S$ of genus $\sigma \ge 2$, there are only finitely many conjugacy classes of finite subgroups in $\Homeo^+(S)$. (These are, by the Nielsen realization theorem \cite{Ke}, in one to one correspondence with the conjugacy classes of finite subgroups of the mapping class group $MCG(S)$.)  The analysis below shows that this number is more than 1 for every genus, in contrast to parts (a) and (b) of Theorem 1.5, in spite of the fact that the singular set is always 0-dimensional.  To see the finiteness  note first that for
any finite group $G$ there are only finitely many  conjugacy classes of subgroups of
$G$. That all topological actions of finite groups on surfaces can be smoothed is classical \cite{K}. Smooth actions can be made
isometric on some hyperbolic structure either by direct construction (cut paste PL
methods) or by using the uniformization theorem: there is a unique hyperbolic structure
conformal to any invariant Riemannian metric, and that hyperbolic metric has an
isometric action of $G$.

The finiteness of the number of actions is either obvious by thinking of the data
required to reconstruct $\Sigma \to \Sigma/G$ in terms of the quotient manifold, ramification points,
and group homomorphisms from  $\pi_1$(Nonsingular part of $\Sigma/G$) $\to G$. (See our
discussion of the Riemann-Hurwitz formula below.) As one varies over all finite groups,
one has only a finite amount of data for any fixed genus.

Despite all this, let us show that Theorem 1.2 fails in dimension 2 in the strongest possible way, namely:

\begin{thm} For no finite group $G$ does there exists a $G$-weakly exclusive 2-dimensional closed manifold. In fact, for every genus $\sigma > 1$, the set of isomorphism classes of  finite maximal subgroups of $\Homeo^+(S_{\sigma})$ is finite with more than one element, while for $\sigma = 0 $ or $1$, there are no maximal finite subgroups.
  \end{thm}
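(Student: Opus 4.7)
The plan: the two halves of the theorem are linked, since a maximum element (up to isomorphism) in the poset of finite subgroups of $\Homeo^+(S_\sigma)$ would exactly give a weakly exclusive structure, and conversely. Thus it suffices to show that for every genus the poset of finite subgroups of $\Homeo^+(S_\sigma)$ has no maximum element, and to count (and bound) its maximal elements in the genus $\sigma\ge 2$ case.

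For $\sigma=0$, Kerékjártó's theorem conjugates every finite subgroup of $\Homeo^+(S^2)$ into $SO(3)$, and the chain of cyclic rotation groups $C_n\subsetneq C_{2n}\subsetneq C_{4n}\subsetneq\cdots$ yields finite subgroups of unbounded order; hence no finite $G$ can contain them all up to isomorphism, and no maximum exists. For $\sigma=1$, every finite subgroup of $\Homeo^+(T^2)$ can after conjugation be put in affine form $(\bbz_m\oplus\bbz_n)\rtimes P$ with $P$ a crystallographic point group, and the inclusion $(\bbz_m\oplus\bbz_n)\rtimes P\subsetneq(\bbz_{km}\oplus\bbz_{kn})\rtimes P$ given by refining the translation lattice again produces arbitrarily large finite subgroups, so the poset has no maximum.

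For $\sigma\ge 2$, the Nielsen realization theorem together with the Hurwitz bound $|G|\le 84(\sigma-1)$ reduce everything to a finite list of Fuchsian quotients, immediately yielding finiteness of the number of isomorphism classes of maximal finite subgroups. To produce at least two such maximal isomorphism types, I would apply the Riemann--Hurwitz formula
$$2\sigma-2=|G|\Bigl(2h-2+\sum_{i=1}^k(1-1/m_i)\Bigr)$$
and select two realizable signatures whose associated Fuchsian groups are maximal in Singerman's classification of finite-index inclusions of Fuchsian groups. A canonical first choice is Wiman's cyclic action with signature $(0;2,2\sigma+1,4\sigma+2)$, giving $G_1\cong\bbz_{4\sigma+2}$; the second $G_2$ should be a non-abelian action, for instance a triangle-group action of signature $(0;2,3,r)$ with $r$ forced by Riemann--Hurwitz for the given $\sigma$, or a dihedral/quadrangle action when no such $(2,3,r)$ is hyperbolic and realizable. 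Singerman's theorem then certifies both the maximality of each Fuchsian overgroup and the incomparability of the two, so both descend to maximal finite subgroups of $\Homeo^+(S_\sigma)$, and they are non-isomorphic because $G_2$ is non-cyclic. Unboundedness of the number of maximal classes as $\sigma\to\infty$ (stated in Proposition~6.2) follows from the proliferation of arithmetically distinct realizable maximal signatures as $4\sigma+2$ acquires more prime factors and new triangle and quadrangle signatures become hyperbolic.

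The main obstacle is choosing a second maximal signature that works uniformly for every $\sigma\ge 2$, since Hurwitz genera, hyperelliptic genera, and generic genera each admit slightly different realizations of maximal non-cyclic Fuchsian actions; for every such case one must invoke Singerman's classification to exclude a Fuchsian extension of the candidate $G_2$ and to verify non-comparability with $G_1$, which is where the bulk of the technical work would lie.
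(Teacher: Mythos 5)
Your treatment of $\sigma=0,1$ (unbounded orders of finite subgroups) and your finiteness argument for $\sigma\ge 2$ (Nielsen realization plus the Hurwitz bound) agree with the paper. The gap is in the core claim that there is more than one isomorphism class of maximal finite subgroups for every $\sigma\ge 2$. You propose to exhibit two incomparable maximal actions via Singerman's classification, but you never actually produce the second one --- you yourself flag this as ``where the bulk of the technical work would lie,'' and that work is the whole theorem. Worse, your one explicit choice is defective: the Wiman signature $(0;2,2\sigma+1,4\sigma+2)$ is of the form $(0;2,n,2n)$ with $n=2\sigma+1$, and $(0;2,n,2n)$ sits on Singerman's list of \emph{non-maximal} signatures (it has index $3$ in $(0;2,3,2n)$). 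So Singerman's theorem certifies the opposite of what you assert; to salvage $\bbz_{4\sigma+2}$ as a maximal subgroup you would have to show, for each $\sigma$, that no group of order $3(4\sigma+2)$ is $(2,3,4\sigma+2)$-generated compatibly with the given cyclic subgroup --- a nontrivial group-theoretic verification in its own right. More generally, maximality of a signature is neither necessary nor sufficient for maximality of the finite subgroup: one must always check whether the surface-kernel epimorphism extends, so ``Singerman certifies both the maximality\ldots and the incomparability'' overstates what the classification gives you.

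The paper sidesteps all of this by never identifying any maximal subgroup. It observes that, since orders of finite subgroups of $\Homeo^+(S_\sigma)$ are bounded by $84(\sigma-1)$, every finite subgroup lies in a maximal one, so ``exactly one isomorphism class of maximal subgroups'' is equivalent to ``$S_\sigma$ is $G$-weakly exclusive for some $G$.'' It then exhibits three explicit actions via the converse Riemann--Hurwitz construction --- $C_{\sigma-1}$ (signature $(2;-)$), $C_\sigma$ (signature $(1;\sigma,\sigma)$), and the Accola--Maclachlan group $H_\sigma$ of order $8(\sigma+1)$ --- and notes that weak exclusivity would force $\mathrm{lcm}(\sigma-1,\sigma,8(\sigma+1))\ge\tfrac12(\sigma-1)\sigma(\sigma+1)$ to divide $|G|\le 84(\sigma-1)$, which fails for $\sigma\ge 13$; the remaining genera $2\le\sigma\le 12$ are eliminated by short ad hoc arguments (Sylow considerations for $\sigma=8$, $SL_2(7)$ for $\sigma=5$, $\Sym(5)$ for $\sigma=4$, and the classification of Broughton for $\sigma=2,3$). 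This divisibility-versus-Hurwitz argument is both more elementary and uniform in $\sigma$; if you want to pursue your route, you would need to replace the Wiman signature by a genuinely maximal one and supply the missing second family, at which point you would essentially be redoing the harder parts of the literature on maximal automorphism groups genus by genus.
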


  Recall first that closed, oriented  surfaces are classified by their genus $0\le \sigma\in\bbz$.  Clearly the surfaces of genus $0$ (the sphere) and genus $1$ (the torus) cannot be $G$-weakly exclusive  for any $G$ since each of them has self-automorphisms of unbounded finite order.  So from now on assume $\sigma\ge 2$.  Now, if $S = S_\sigma$ and $G$ is a finite group of automorphisms, then by \cite{K} and the
  Hurwitz bound  $|G|\le 84(\sigma-1)$. The   Riemann-Hurwitz formula asserts that in this case, letting  $\bar S = S / G, \,   \pi:S \to \bar S$ the quotient map which  is a ramified covering, ramified at $0 \le r \in \bbz$ points with ramification indices $m_1,\dots, m_r$ and if $\bar S$ is of genus $\rho$ then the following holds
\beq\label{eq.1}2\sigma - 2 = |G| (2\rho - 2 + \sum\limits^r_{i = 1} (1 - \frac{1}{m_i})).\ee

What is even more important for us is the converse.  Namely if $G$ is a finite group generated by elements $a_1, \dots, a_\rho, b_1, \dots, b_\rho, c_1, \dots, c_r$
where
\beq\label{eq.2} \prod\limits^\rho_{j = 1} [a_j, b_j] \prod\limits^r_{j = 1} c_i = 1 \; \; \text{and}\ee
\beq\label{eq.3} \text{for\ }\, i = 1, \dots, r, \; \; c_i \;  \text{is\ of\ order\ } m_i\ee
and if \eqref{eq.1} holds, then $G$ acts faithfully on $S= S_\sigma$ with quotient $\bar S = S/G$ of genus $\rho$ and ramification indices $m_1,\dots, m_r$.

The Hurwitz upper bound actually follows from equation \eqref{eq.1}: To get the largest $G$, for a fixed $\sigma$, one wants the  term in brackets on the right-hand side to be minimal but positive.  A careful elementary analysis shows that the smallest value is $\frac{1}{42}$  and it is obtained only if $\rho = 0, r = 3$ and $\{m_1, m_2, m_3\} = \{2, 3, 7\}$.
One also observes that if this value is not attained then the next one is $\frac{1}{24}$ with $\rho = 0, r = 3$ and $\{ m_1, m_2, m_3\} = \{ 1, 2, 8\}$. It is known that for infinitely many $g$'s, the upper  bound of $84(\sigma - 1)$ is attained but for infinitely many others it is not (cf. \cite{L}).  In the second case it follows that $|G| \le 48(\sigma - 1)$.

The converse result allows one to prove that various groups act faithfully on $S_\sigma$.  For example, by taking $\rho = 2$ and $r = 0$ we see that

\smallskip

\noindent(a) \ The cyclic group $c_{\sigma - 1}$ of order $\sigma - 1$ acts faithfully on $S_\sigma$.

\medskip

Similarly, by taking $\rho = 1, r = 2$  and $m_1 = m_2 = \sigma$

\smallskip

\noindent (b) \ The cyclic group $C_\sigma$ of order $\sigma$ acts faithfully on $S_\sigma$.

\medskip

Finally, the following group
$$H_\sigma = \langle x, y | x^4 = y^{2(\sigma + 1)} = (xy)^2 = x^{-1} y)^2 = 1 \rangle $$
is shown in [Ac] and \cite{Mc} to be of order $8(\sigma + 1)$.
(Note that by the two last relations  every element of $H_\sigma$ can be written as $x^ay^b$ with $0\le a < 4$ and $0 \le b < 2 (\sigma +1)$).  By taking as 3 generators $c_1 =x, c_2 = y$ and $c_3 = (xy)^{-1}$ which are of orders $4, 2(\sigma +1)$ and 2, respectively, one sees that equation \eqref{eq.1} is satisfied with $\rho = 0, r =3$ and $(m_1, m_2, m_3) = (4, 2(\sigma +1), 2)$ and hence:

\smallskip

\noindent (c) \ The group $H_\sigma$ of order $8(\sigma +1)$ acts faithfully on $S_\sigma$.

\medskip

Let us mention in passing that Accola \cite{A} and Macmillan \cite{Mc}  used (c) to prove a lower bound (as an analogue to the upper bound of Hurwitz) and they showed that for infinitely  many $\sigma$'s, this lower bound of $8(\sigma + 1)$ is best possible.

Back to our goal:  We want to show that $S = S_\sigma$ cannot be $G$-weakly exclusive for any finite group $G$.  Assume it is, then (a), (b) and (c) imply that $C_{\sigma-1}, C_\sigma$ and $H_\sigma$ are subgroups of $G$ and hence:

\beq\label{eq.4} \ell.c.m(\sigma-1, \sigma, 8(\sigma+1)) | |    G|.\ee
Now clearly $\ell.c.m (\sigma-1, \sigma, 8(\sigma + 1)) \ge \frac{(\sigma-1)\sigma(\sigma+1)}{2}$ and by the Hurwitz Theorem $|G| \le 84(\sigma - 1)$.  This implies $\frac 12 \sigma
(\sigma + 1) \le 84$, i.e.,  $\sigma \le 12$.

Now checking case by case for  $\sigma = 6,7,9,10,11,12$, one sees that $\ell.c.m (\sigma -1, \sigma, 8(\sigma +1)) > 84 (\sigma-1)$ in all these cases,  which leads to a contradiction. We are left with $\sigma = 8$ and $2\le \sigma \le 5$.

For $\sigma = 8$, we observe that if $S_8$ is $G$-weakly exclusive than by (b), $C_8$ is a subgroup of $G$ and so is $H_8$ of (c).  But $C_8$ is cyclic, while the 2-sylow subgroup of $H_8$ contains the {\it non-cyclic} subgroup of order 4, generated by $xy $ and $x^{-1}y$.  Thus the 2-sylow subgroup of $G$ is non cyclic and hence of order greater than 8, i.e.,  at least 16.  This implies that $G$ is of order at least $7\cdot 16\cdot 9 > 84\cdot 7$.  This contradicts  the Hurwitz upper bound and hence $S_8$ cannot be $G$-weakly exclusive.

To handle the case $\sigma = 5$, let us observe that the Hurwitz upper bound of $84(\sigma-1) = 336$ is obtained in this case.  Indeed,  look at $G=SL_2(7)$, a group of order 336 with the generators

$$ c_1 = \begin{pmatrix} 1 &1\\0 &1\end{pmatrix}, \; \; c_2 = \begin{pmatrix} 0 &1\\ -1 &0\end{pmatrix}$$ and $c_3 $ where
$$c^{-1}_3 = c_1 c_2 = \begin{pmatrix} -1 &1\\ -1 &0\end{pmatrix} $$
of orders $7, 2$ and $3$,  respectively.

Thus if $S_5 $ is $G$-weakly exclusive, $G$ must be $SL_2(7) $ since Hurwitz bound is attained for this group.  But $SL_2(7)$ does not contain the group $H_5$ of $(c)$ of order 48, since 48 does not divide 336.

Consider now the case $\sigma= 4$.  We claim that $Sym(5)$ of order 120 acts on it.  Indeed, taking $\rho = 0, r = 3$ and
$$ c_1 = (1,2,3,4,5), \; \; c_2 = (1, 2) \; \; \text{and \ } c_3 = (c_1c_2)^{-1} = (5, 4, 3, 1)$$
of orders 5, 2 and 4 respectively. We get a solution to (1)
and hence $Sym(5)$ acts on $S_4$.
Assume now that $S_4$ is $G$-weakly exclusive.
The Hurwitz bound $84.3 = 254$ cannot be obtained in such a case, since $120$ does not divide $254$.  Thus $G$ is of order at most $48\cdot 3 = 154$, but we know that its order should be divisible by 120.  Hence $|G| = 120$ and $G = \Sym(5)$.  But (c) above shows that $G$ should also contain $H_4$ which is of order 40.  As $\Sym (5) $ has no subgroup of order 40, we get a contradiction.   Hence $S_4$ is not $G$-weakly exclusive.

For $\sigma = 2$ and $3$, a full classification of the finite groups acting on $S_\sigma$ is given in \cite{Bro}.  From the list there it is clear that $S_\sigma$ is not $G$-weakly exclusive  also in these last two cases.  The Theorem is now fully proved.
\hfill $\square$

\smallskip

The Theorem says in particular that for every $g$, $\Homeo^+ (\Sigma_g)$ has at least two conjugacy classes of maximal finite subgroups (even isomorphism classes).  In fact, the number of those is unbounded as a function of $g$:

\begin{prop} The number of isomorphism classes of maximal finite subgroups of $\Homeo^+ (\Sigma_g)$ (or equivalently of $MCG(\Sigma_g)$) is unbounded as a function of $g$.
\end{prop}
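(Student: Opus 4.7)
The plan is to produce, for any prescribed integer $K$, a single genus $g$ together with $K$ distinct odd primes $p_1,\dots,p_K$ and pairwise non-isomorphic maximal finite subgroups $M_{p_i}\le\Homeo^+(\Sigma_g)$, each containing $C_{p_i}$. Since $K$ is arbitrary this proves the unboundedness. I realize $C_p$ as a subgroup of $\Homeo^+(\Sigma_g)$ for each prime $p$ with $(p-1)\mid 2g$: the converse to the Riemann--Hurwitz formula \eqref{eq.1}--\eqref{eq.3}, applied with $\rho=0$, $r=2+2g/(p-1)$ and $m_1=\cdots=m_r=p$, yields a faithful branched $C_p$-covering $\Sigma_g\to S^2$.

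Assume further that $p^2>84(g-1)$. Then for any finite subgroup $H\le\Homeo^+(\Sigma_g)$ containing $C_p$, the Hurwitz bound gives $|H|\le 84(g-1)<p^2$, so $|H|/p<p$. By Sylow's theorem the number of Sylow $p$-subgroups divides $|H|/p<p$ and is $\equiv 1\pmod p$, hence equals $1$. Thus $C_p$ is the unique (hence normal) Sylow $p$-subgroup of $H$, with $[H:C_p]<p$, and in particular $p$ is the largest prime factor of $|H|$.

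Let $M_p$ denote any maximal finite subgroup of $\Homeo^+(\Sigma_g)$ containing $C_p$ (these exist because orders are bounded by $84(g-1)$). For two distinct primes $p\ne q$ both satisfying $p^2,q^2>84(g-1)$, I claim $M_p\not\cong M_q$. Indeed, if $p$ divided $|M_q|$, then since $q$ also divides $|M_q|$ and $p\ne q$, we would have $pq\mid |M_q|\le 84(g-1)$; but $pq>\sqrt{84(g-1)}\cdot\sqrt{84(g-1)}=84(g-1)$, a contradiction. Hence $p\mid |M_p|$ while $p\nmid |M_q|$, so $|M_p|\ne|M_q|$ and the abstract isomorphism classes differ.

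What remains is a number-theoretic task: for any $K$, exhibit a genus $g$ admitting at least $K$ primes $p$ with $(p-1)\mid 2g$ and $p^2>84(g-1)$. I propose taking $2g=L$ for $L$ highly composite, for instance $L=q_1q_2\cdots q_n$ with $n$ distinct primes of comparable size; then $L$ has $2^n$ divisors, of which roughly $2^{n-1}$ exceed $\sqrt{42L}$. Invoking Dirichlet's theorem on primes in arithmetic progressions together with a density argument on divisors $d$ with $d+1$ prime, one expects $\sim 2^{n-1}/\log L$ qualifying primes, which grows without bound with $n$. The main obstacle is making this last count fully rigorous: precisely estimating the abundance of large divisors $d$ of a highly composite $L$ for which $d+1$ is prime, simultaneously in a single $L$, requires some analytic number theory. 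Modulo that technical step, the Sylow argument above reduces the geometric problem to a clean arithmetic one and the proof is complete.
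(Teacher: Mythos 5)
The geometric and group-theoretic half of your argument is sound: the converse Riemann--Hurwitz construction with $\rho=0$ and $r=2+2g/(p-1)$ does realize $C_p$ in $\Homeo^+(\Sigma_g)$ whenever $(p-1)\mid 2g$, and the Sylow-plus-Hurwitz observation that two primes $p\neq q$ with $p^2,q^2>84(g-1)$ cannot both divide the order of a single finite subgroup correctly forces $M_p\not\cong M_q$. The genuine gap is exactly the step you flag: the assertion that for every $K$ there exists a single $g$ admitting $K$ primes $p$ with $(p-1)\mid 2g$ and $p^2>84(g-1)$ is not proved, and the route you sketch does not prove it. Dirichlet's theorem guarantees infinitely many primes in each progression $1 \bmod d$, but says nothing about whether $d+1$ itself is prime for many large divisors $d$ of one fixed highly composite $L$; ``one expects $\sim 2^{n-1}/\log L$'' is a heuristic, not an estimate. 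A correct version exists but is a small theorem in its own right (in the spirit of Prachar's result on divisors of the form $p-1$): one must let a cofactor vary --- e.g.\ seek $m\in[M,2M]$ with $me_i+1$ prime for $K$ divisors $e_i$ of an auxiliary $L$, count pairs $(m,e)$ using quantitative prime counting in progressions (Siegel--Walfisz, since the moduli grow with $L$), and pigeonhole over $m$. As written, your proof reduces a topology problem to an unproved arithmetic one, so it is incomplete.

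It is worth noting that the paper's proof avoids number theory entirely and is considerably softer: it fixes the genus-$2$ surface group $\Gamma$, observes that every normal subgroup $\triangle$ of index $n$ yields an unramified cover $\Sigma_{n+1}$ on which $\Gamma/\triangle$ acts, and invokes the superpolynomial count $n^{O(\log n)}$ of $2$-generated groups of order $n$ to get unboundedly many non-isomorphic finite subgroups of $\Homeo^+(\Sigma_{n+1})$ of order $n$; it then passes to maximal subgroups by noting each such subgroup has index at most $84$ in a maximal one, which is therefore generated by fewer than $9$ elements and so contains boundedly many subgroups of bounded index. If you want to salvage your approach, either supply the Prachar-type argument above or switch to a counting argument of this kind that does not require simultaneous primality conditions.
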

\begin{proof} Let $\Ga$ be a fixed cocompact lattice in $PSL(2, R)$ which is the fundamental group of a surface of genus 2.
Every normal subgroup $\triangle$ of $\Ga$ of index $n$ defines a covering surface $\Sigma_r$ when $r = n+1$, for which $\Ga/\triangle$  serves as a group of (orientation preserving) isometries and hence define a finite subgroup of $\Homeo^+(\Sigma_r)$. Now, the number of isomorphism classes of finite groups of order at most $n$ which are generated by 2 elements is super polynomial (in fact, this number of groups grows like $n^{O (\log n)}$  -- see [Lu] and the references therein).  Thus, there is an infinite set of $r$'s for which there is an unbounded number of non isomorphic finite subgroups of $\Homeo^+(\Sigma_r)$ of order $r-1$.
Even if these subgroups are not maximal, there are also unboundedly many isomorphism classes of maximal subgroups.  Indeed, each one of the above is of index at most 84 (by Hurwitz upper bound) in a maximal subgroup.  Now, every maximal subgroup containing one of these subgroups is generated by at most $2 + \log_2 (84) < 9$ elements.  The number of bounded index subgroups in any group with a bounded number of generators is uniformly bounded.  This finishes the proof of the Proposition.
\end{proof}

\section{Topological rigidity}

In this section we prove Theorem 1.7 and 1.8.

Theorem 1.8  is proved essentially the same way as Theorem 1.5, but two  modifications are needed: Theorem 1.5 was proved by restricting to actions of the cyclic group $G = \bbz/p$, while now we need similar results for actions of general finite groups $G$.  Let us indicate how the method of proof for $\bbz/p$ generalizes to general $G$.
\begin{enumerate}
\item Note that if $H$ is any group acting semifreely (i.e.,  with only two kinds of orbits, fixed points and free orbits) the literally same proof as for $\bbz/p$  works.
\item Now if $G$ is a finite group, consider the least singular of the  singular points, i.e., the non-singular points of the singular set.  Each of these will be fixed by some group $H$.  The $H$-fixed set consists of points fixed by $H$, and maybe also some more singular points fixed by a larger group.  We will do our modification near points that are fixed only by $H$ or by a conjugate of it.  These points are the $G$ orbit of points fixed just by $H$.
    \item The modification will be done as follows. Start with a semi-free $H$-sphere $S$ with fixed set $\Sigma$ and normal representation - the $H$ representation that occurs at a fixed point of $H$  (that is on the top stratum of the singular set, as in \# 2).  We can consider the product space $(G \times S)/H$, where $H$ acts on the left on $G$ and the right on $S$. So,  $G$ acts on this product.  The underlying topological space is $G/H \times S$, but the action is more interesting.  It is called the induction of the $H$-action on $S$ to $G$.
        \item Now take connected sum along an orbit of $M$ with $(G \times S)/H$.  It is homeomorphic to $M$, but  the singular set is modified by connect sums of copies  of $\Sigma$ in various places.
            \item Similar tricks work when we do Edwards modifications.
            \end{enumerate}

            With the above modification all the results proved in Sections 2-5 can be modified to work with general finite group $G$.

            The second modification is easier:  We should think of proper discontinuous actions of $\Ga$ on $H/K$ as follows.  Let $\triangle$ be a normal finite index torsion free subgroup of $\Ga$.  Then $M = \triangle\setminus H/K$ is a compact manifold upon which $G = \Ga/\triangle$ acts.  Note that $M$ is indeed compact whatever the (proper discontinuous) action of $\Ga$ on $H/K$ is, since the cohomological  dimension of $\triangle$ is $\dim(H/K)$ as deduced from its original isometric action.  Now, taking the above mentioned modification (from $\bbz/p$ to $G$), Theorem 1.8 is deduced from Theorem 1.5 by standard covering space theory, changing from $M$ to its universal cover $H/K$.  Note, however,
 that the formulation of the two theorems is slightly different due to the fact that $\gamma \in \Ga$ has a fixed point in $H/K$ if and only if it is of finite order.  Also, an automorphism of $M=\triangle\setminus H/K$ with a fixed point can be lifted to an element of finite order  in $\Ga = N_H(\triangle)$.

 Theorem 1.7 is essentially equivalent to Corollary 1.6, but one needs to ensure that when there are countably many proper discontinuous actions of $\Ga$ on $H/K$, these actions are isolated, i.e., a small perturbation of each such action is conjugate to it.  This is indeed the case (in contrast with the theory of deformations into Lie groups in which case, if there are infinitely many cocompact discrete representations, then local rigidity fails and there are continuously many such actions.) The point is that when the fixed points form a discrete set one does have a local topological rigidity (and even in dimension 4).  This follows from Edmonds' Theorem \cite[Theorem 2.8]{Ed} in high dimensions.  The work of \cite{FrQ} (see also \cite{FeW}) shows it is true also in dimension 4.  The case of dimension 3 is always covered by either Theorem 1.5 (a) or (c), and case (b) does not happen.
 \hfill $\square$


\end{document}